\newtheorem{thm}{Theorem}
\newtheorem{lem}[thm]{Lemma}
\newtheorem{rem}[thm]{Remark}
\numberwithin{equation}{section}
\numberwithin{thm}{section}
\numberwithin{table}{section}
\def\squareforqed{\hbox{\rlap{$\sqcap$}$\sqcup$}}
\def\qed{\ifmmode\squareforqed\else{\unskip\nobreak\hfil
\penalty50\hskip1em\null\nobreak\hfil\squareforqed
\parfillskip=0pt\finalhyphendemerits=0\endgraf}\fi}
\def \balpha{\bm{\alpha}}
\def \bbeta{\bm{\beta}}
\def\cE{{\mathcal E}}
\def\cP{{\mathcal P}}
\def\cS{{\mathcal S}}
\def\cX{{\mathcal X}}
\def\inv{\mathrm{inv}} 
\def\sqrt{\mathrm{sqrt}}
\def \F {{\mathbb F}}
\def\\{\cr}
\def\({\left(}
\def\){\right)}
\def\fl#1{\left\lfloor#1\right\rfloor}
\def\rf#1{\left\lceil#1\right\rceil}
\def\mand{\qquad\mbox{and}\qquad}
\newcommand{\ov}{\overbar}
\newcommand{\overbar}[1]{\mkern 1.5mu\overline{\mkern-1.5mu#1\mkern-1.5mu}\mkern 1.5mu}
 \newcommand{\Mod}[1]{\ (\mathrm{mod}\ #1)}
\begin{document}

\title[Energy bounds, bilinear sums in function fields]
{Energy bounds, bilinear sums and their applications   in function fields}

 \author[C.~Bagshaw]{Christian Bagshaw}
 \address{School of Mathematics and Statistics, University of New South Wales.
 Sydney, NSW 2052, Australia}
 \email{c.bagshaw@unsw.edu.au}

 \author[I.~E.~Shparlinski]{Igor E. Shparlinski}
 \address{School of Mathematics and Statistics, University of New South Wales.
 Sydney, NSW 2052, Australia}
 \email{igor.shparlinski@unsw.edu.au}

 \begin{abstract}  
We obtain function field analogues of recent energy bounds on modular square roots and modular inversions and apply them to bounding some bilinear sums and to some questions regarding smooth and square-free polynomials in residue classes. 
\end{abstract}  

\keywords{function field, modular roots, modular inversions, additive energy, bilinear sums, square-free, smooth}
\subjclass[2010]{11T06, 11T23}

\maketitle

\tableofcontents

\section{Introduction}

\subsection{Motivation} 

Given a prime number $p$ we denote by $\F_p$ the field of $p$ elements, which we assume to be 
represented by the set $\{0, \ldots, p-1\}$ and for a positive integer $n <p$ we define  
the {\it additive energy  of modular square roots\/} as
\begin{align*}
\cE_p^{\sqrt}(n) =     \#\{(u,v, x,y) \in \F_p^4:~ u + v &= x+y, \\
&    u^2,v^2, x^2,y^2 \in \{1,\ldots, n\}  \}. 
\end{align*}  
Recently, several bounds on $\cE_p^{\sqrt}(n)$ have been established in~\cite{DKSZ,KSSZ,SSZ1}, which in particular imply
\begin{equation} 
\label{eq;Energy Sqrt}
\cE_p^{\sqrt}(n) \le   \min\left\{n^4/p + n^{5/2}, \,  n^{7/2}/p^{1/2}+ n^{2}   \right\} p^{o(1)}.
\end{equation} 
These energy bounds,  such as~\eqref{eq;Energy Sqrt},  have been used in~\cite{DKSZ,KSSZ,SSZ1} 
to estimate certain bilinear sums with square roots and also  to bounding sums of Sali{\'e} sums, improving 
that of Dunn and Zaharescu~\cite{DuZa}, which is based on a different approach. In turn, the  bounds of such bilinear sums  have
found several applications to such classical number theoretic topics as 
 asymptotic formulas for moments of some   half integral weight $L$-functions, 
distribution of modular square roots of primes and  variations of the  Erd\H{o}s-Odlyzko-S\'{a}rk\"{o}zy conjecture~\cite{EOS}, 
see~\cite{DKSZ, DuZa, KSSZ, SSZ1, SSZ2} for more details. 

Furthermore,  Heath-Brown~\cite[Page~368]{H-B} has given the  bound
\begin{equation} 
\label{eq;Energy Inv}
\cE_p^{\inv}(n) \le \(\frac{n^{7/2}}{p^{1/2}} + n^2\) p^{o(1)}
\end{equation} 
 on the 
 {\it additive energy  of modular inversions\/} 
  \begin{align*}
\cE_p^{\inv}(n) =     \#\{(u,v, x,y) \in & \{1,\ldots, n\}^4:\\
 &u^{-1} + v^{-1}  \equiv x^{-1} +y^{-1}  \pmod p \}. 
\end{align*}
We also note that Bourgain and  Garaev~\cite[Corollary~4]{BouGar} have given 
a different way to establish~\eqref{eq;Energy Inv} (which is 
the approach we use in this work).  Various applications on the bound~\eqref{eq;Energy Inv}
have also been given in~\cite{BouGar,H-B}. Additionally, it has been used in~\cite{MuShp,MSY} 
to study the distribution of  integers of prescribed arithmetic structure 
(such as smooth, square-free and square-full) in arithmetic progressions and in~\cite{Shp} to new bounds of bilinear sums with Kloosterman sums.

To give an example of such applications, 
we recall that an integer $n$ is called $y$-smooth if all prime divisors  $\ell \mid n$ 
satisfy $\ell \le y$.  Now, for a prime $p$, we define $M(p)$ to be the smallest number such that every residue   class modulo $p$ can be represented by a $p$-smooth  square-free integer 
not exceeding $M(p)$. 
Booker and Pomerance~\cite{BoPom} prove that for $p > 7$ this quantity is well-defined 
(that is,  $M(p) < \infty$), give the bound $M(p)=p^{O(\log p)}$ and conjecture  that $M(p) \leq p^{O(1)}$. This conjecture has 
been settled in~\cite{MuShp} in a more general form. 
Furthermore, for a real $\alpha>0$, both papers~\cite{BoPom} and~\cite{MuShp}
also define and discuss a more general quantity  $M_\alpha(p)$ which is defined
as  the smallest number such that every non-zero residue  class modulo $p$ can be represented by a   $p^\alpha$-smooth square-free integer 
not exceeding  $M_\alpha(p)$.  Furthermore, in~\cite{MSY} one can find new lower bounds on the number of $y$-smooth  square-free integers  $n \le x$ in a given residue class modulo $p$.

Here we obtain function field analogues of the bounds~\eqref{eq;Energy Sqrt} 
and~\eqref{eq;Energy Inv}, use these to estimate some bilinear sums
and then give applications to the distribution of smooth square-free polynomials 
in residue classes modulo an irreducible polynomial $F$ over a finite field.

\subsection{New set-up}\label{sec:setup}  Here we consider the above questions in the setting of function fields over finite fields.

We fix an odd prime power $q$ and an irreducible polynomial $F(X)$ over $\F_q$ of degree $r$, and consider the finite field $\F_q[X]/F(X)$. One can notice that, as fields, $\F_q[X]/F(X) \cong \F_{q^r}$.  For an integer $m$, we write $f \sim m$ to mean $f \in \F_{q^r}$, but $\deg f < m$ when we view $f \in \F_q[X]/F(X)$. 
In particular, if $\rho$ is a root of $F$, then we identify the following two sets
\begin{align*} 
\{f( X) & \in \F_q[X]/F(X):~f \sim m \} \\
& = \{u_0+u_1\rho + \ldots + u_{m-1} \rho^{m-1}:~  u_0, u_1, \ldots, u_{m-1} \in \F_q\}. 
\end{align*}  
Thus below we switch freely between the languages of function fields $\F_q[X]/F(X)$ and of finite fields $\F_{q^r}$.

As in previous works~\cite{DKSZ,KSSZ,SSZ1} we attach to our variables some complex weights. 
Namely, given two positive integers $m,n \leq r$ and two sequences of complex weights
\begin{equation} \label{Weights}
 \balpha = (\alpha_f)_{f \sim m }   \mand \bbeta = (\beta_g)_{g \sim n} , 
\end{equation} 
we denote 
$$\|\balpha\|_\infty = \max_{f \sim m }|\alpha_f| \mand  \|\balpha\|_\sigma = \(\sum_{f \sim m }|\alpha_f|^\sigma\)^{1/\sigma} \quad (\sigma\ge 1), $$
and  similarly for $\bbeta$. It is also convenient to define the weight $\bm{1}_m$ to simply be the characteristic function for $f \sim m$. 

We recall, the notation $U = O(V)$, 
$U \ll V$ and $ V\gg U$  are equivalent to $|U|\leqslant c V$ for some positive constant $c$, 
which throughout the paper may depend on the size $q$ of the ground field.

\begin{rem} We note that the condition that $q$ is odd is only needed for 
results concerning square roots. Other statements, such as 
Theorems~\ref{thm:AddEnergy-inv}, \ref{thm:BilinearBound-inv}, 
\ref{Thm:MBound} and~\ref{thm:Psi-LB} hold for any $q$. 
\end{rem}  

\section{Statements of the results}


\subsection{Energy of square roots and reciprocals}

For a weight $\bbeta$ as in~\eqref{Weights} we now define the weighted additive energy 
 $$ E_{q,r}^{\sqrt}(\bbeta) = \sum_{\substack{(u,v,x,y) \in \F_{q^r}^4 \\ u+v = x+y}}\beta_{u^2}\ov{\beta}_{v^2}\beta_{x^2}\ov{\beta}_{y^2}.$$

Note it is not difficult to see that $E_{q,r}^{\sqrt}(\bbeta)$ is a non-negative real number, see~\eqref{eq:E=Q^2} below. It is also important to recognize the special case
\begin{align*} 
E_{q,r}^{\sqrt}(\bm{1}_m)=  \#\{(u,v,x,y) \in \F_{q^r}^4: ~  u + v &= x +y , \\
 & \qquad   u^2,  v^2,  x^2,   y^2  \sim  m\}. 
\end{align*}  
In particular our goal is to improve the trivial bound 
\begin{equation} \label{eq:TrivBound-E}
E_{q,r}^{\sqrt}(\bm{1}_m) \ll q^{3m}.
\end{equation}

Here we establish the following:

 \begin{thm}\label{thm:AddEnergy-sqrt}
For any positive integer $m \leq r$ and for a weight $\bbeta$ as in~\eqref{Weights}  we have 
$$
E_{q,r}^{\sqrt}(\bbeta) \le
    |\bbeta\|_1^2|\bbeta\|_\infty^2 q^{m/2+o(m)} \(q^{m-r/2}+1\).
    $$
\end{thm}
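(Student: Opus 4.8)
The plan is to follow the standard Fourier/exponential-sum approach to additive energies, as used for $\cE_p^{\sqrt}(n)$ in \cite{DKSZ,KSSZ,SSZ1}, but carried out over the finite field $\F_{q^r}$. First I would write $E_{q,r}^{\sqrt}(\bbeta)$ as an $L^2$-norm. For each $w \in \F_{q^r}$ set
$$
Q(w) = \sum_{\substack{u,v \in \F_{q^r} \\ u+v = w}} \beta_{u^2}\ov{\beta}_{v^2},
$$
so that, by orthogonality of additive characters of $\F_{q^r}$,
$$
E_{q,r}^{\sqrt}(\bbeta) = \sum_{w \in \F_{q^r}} |Q(w)|^2,
$$
which is the identity \eqref{eq:E=Q^2} referred to in the excerpt and makes non-negativity manifest. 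The next step is to expand $|Q(w)|^2$ and sum over $w$, turning the constraint $u+v=x+y$ into a character sum; equivalently, apply Fourier inversion directly to the four-variable count. The key point is that the variables $u,v,x,y$ are constrained only through their \emph{squares} lying in the set $S_m = \{f : f \sim m\}$, a subset of size $q^m$, together with the linear relation $u+v=x+y$.

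The heart of the argument is to detect the ``square'' constraint. Writing $\beta_{u^2}$ means the weight depends only on $u^2$; introducing new summation variables $a = u^2$, $b = v^2$, etc., each value $a \in S_m$ that is a nonzero square corresponds to exactly two values of $u$, namely $\pm\sqrt{a}$ (this is where oddness of $q$ enters). So I would rewrite
$$
E_{q,r}^{\sqrt}(\bbeta) = \sum_{a,b,c,d \sim m} \alpha_a \ov{\alpha}_b \alpha_c \ov{\alpha}_d \cdot N(a,b,c,d),
$$
with a slight abuse renaming $\bbeta$-indexed-by-squares as $\alpha$-indexed-by-$S_m$, where $N(a,b,c,d)$ counts quadruples of square roots $(\pm\sqrt a \pm \sqrt b = \pm \sqrt c \pm \sqrt d)$. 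Opening $N$ with additive characters $\psi$ of $\F_{q^r}$, the inner object becomes a product of Gauss-type / Sali\'e-type sums $\sum_{t} \psi(\lambda t)$ over $t$ with $t^2 \in S_m$. Bounding $\sum_{t^2 \sim m}\psi(\lambda t)$ is the crucial ingredient: for $\lambda \neq 0$ this is a complete character sum over a ``box'' $S_m$ in $\F_{q^r}$ (an $\F_q$-subspace of dimension $m$ in the $\rho$-basis), and one expects square-root cancellation $q^{m/2+o(m)}$ coming from Weil-type bounds, exactly paralleling the classical estimate for incomplete Sali\'e/Kloosterman-type sums that underlies \eqref{eq;Energy Sqrt}. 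The term $q^{m-r/2}$ in the theorem is precisely the ratio (number of $\lambda$, which is $q^r$) $\times$ (square-root saving)$/$(trivial normalization), while the $+1$ accounts for the diagonal $\lambda=0$ contribution.

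Concretely, after the character expansion I expect a bound of the shape
$$
E_{q,r}^{\sqrt}(\bbeta) \le \|\bbeta\|_\infty^2 \sum_{a,c \sim m}\Big| \sum_{b,d \sim m} \text{(char. sum in $a,b,c,d$)}\Big|,
$$
then Cauchy--Schwarz in $(a,c)$ to replace the outer weights by $\|\bbeta\|_1$ appropriately — this is how the factor $\|\bbeta\|_1^2\|\bbeta\|_\infty^2$ arises, with two indices absorbed at $L^1$ cost and two at $L^\infty$ cost. The diagonal contribution (where the character sum degenerates) gives the $+1$, i.e.\ a term $\|\bbeta\|_1^2\|\bbeta\|_\infty^2 q^{m/2+o(m)}$, and the off-diagonal, after summing the square-root-cancelling inner sums over the $\approx q^r$ frequencies, gives $\|\bbeta\|_1^2\|\bbeta\|_\infty^2 q^{m/2+o(m)} \cdot q^{m-r/2}$.

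The main obstacle I anticipate is establishing the needed square-root cancellation for the incomplete character sum $\sum_{t : t^2 \sim m} \psi(\lambda t)$ over $\F_{q^r}$ uniformly in $\lambda \ne 0$, together with controlling how errors accumulate when this is summed over all frequencies $\lambda$. In the integer setting this is the role played by Kloosterman/Sali\'e sum bounds and completion techniques; in the function-field setting one must identify the correct analogue — likely a Weil bound for the relevant curve or an application of a function-field completion lemma that presumably appears earlier in the paper — and verify it holds with the claimed uniformity, since the $q^{o(m)}$ factor must absorb all the logarithmic losses from completion. A secondary technical point is the bookkeeping of the $\pm$ sign choices and of zero/non-square values of $a,b,c,d$, which contribute only lower-order terms but must be checked not to spoil the stated bound.
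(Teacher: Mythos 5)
Your opening moves coincide with the paper's: writing $E_{q,r}^{\sqrt}(\bbeta)=\sum_{\lambda}|Q_\lambda(\bbeta)|^2$, splitting off the zero frequency, and using $E_{q,r}^{\sqrt}(\bbeta)\le\max_{\lambda\ne0}|Q_\lambda(\bbeta)|\sum_{\lambda}|Q_\lambda(\bbeta)|+|Q_0(\bbeta)|^2$ together with $\sum_\lambda|Q_\lambda(\bbeta)|\ll\|\bbeta\|_1^2$ and $|Q_\lambda(\bbeta)|\le\|\bbeta\|_\infty^2\,Q_\lambda(\bm{1}_m)$ is exactly how the factor $\|\bbeta\|_1^2\|\bbeta\|_\infty^2$ arises in the paper as well. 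The gap is in the step you yourself single out as the crucial ingredient: uniform square-root cancellation $\bigl|\sum_{t:\,t^2\sim m}\psi(\lambda t)\bigr|\le q^{m/2+o(m)}$ for $\lambda\ne0$. No such bound is available, and neither Weil's theorem nor completion delivers it. Writing the indicator of the subspace $S_m=\{f:\,f\sim m\}$ as $q^{m-r}\sum_{\mu}\psi(\mu x)$ over the annihilator and evaluating the resulting complete Gauss sums gives only $|\sum_{t^2\sim m}\psi(\lambda t)|\le q^{r/2}$, which is worse than trivial when $m<r/2$; a genuine $q^{m/2}$ bound for this incomplete Weyl sum over square roots is precisely the (open, already in the integer setting) pointwise estimate that the energy-bound machinery of~\cite{DKSZ,KSSZ,SSZ1} exists to circumvent. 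Indeed, in this very paper the sums $A_\lambda=\sum_{f\sim m}\sum_{t^2=f}\psi(t\lambda)$ are only ever controlled in fourth mean via $\sum_\lambda|A_\lambda|^4=q^rE_{q,r}^{\sqrt}(\bm{1}_m)$, i.e.\ the energy bound is used to control them, so your route is essentially circular.

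What the paper actually does at this point is elementary and character-free. For $\lambda\ne0$, a pair $(u,v)$ with $u-v=\lambda$ and $U=u^2,\ V=v^2\sim m$ satisfies $(Z-\lambda^2)^2=4\lambda^2V$ with $Z=U-V\sim m$ (the sets $\{f\sim m\}$ are $\F_q$-subspaces, hence closed under differences), and each $(Z,V)$ arises from at most $4$ pairs $(u,v)$. Thus $Q_\lambda(\bm{1}_m)$ is bounded by the number of points of a degree-two curve in a box, and Lemma~\ref{lem:f(x)=y} (the Cilleruelo--Shparlinski concentration bound, proved by divisor-counting rather than exponential-sum methods) yields $Q_\lambda(\bm{1}_m)\le(q^{m/2}+q^{3m/2-r/2})q^{o(m)}$, which is exactly the factor $q^{m/2+o(m)}(q^{m-r/2}+1)$ in the statement. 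So both terms of the theorem come from this single point-counting lemma, not from a diagonal/off-diagonal split of a character expansion as in your accounting. To repair the proposal you need to replace the hoped-for pointwise cancellation with this (or an equivalent) concentration-on-curves estimate.
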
  

In particular, by Theorem~\ref{thm:AddEnergy-sqrt}  we have
$$
E_{q,r}^{\sqrt}(\bm{1}_m) \ll q^{7m/2-r/2}  + q^{5m/2}
$$
which is always stronger than~\eqref{eq:TrivBound-E} (unless $m$ is very close to
$r$ when no nontrivial bound is  possible). 

To formulate our next result we define
\begin{align*} 
E_{q,r}^{\inv}(m)=  \#\{(u,v,x,y) \in \F_{q^r}^4: ~  u^{-1} + v^{-1} &= x^{-1} +y^{-1} , \\
 & \qquad   u,  v,  x,   y  \sim m\}. 
\end{align*}  
We then have the following result:

 \begin{thm}\label{thm:AddEnergy-inv}
For any positive integer $m \leq r$ we have
$$
E_{q,r}^{\inv}(m) \le \(q^{(7m-r)/2}+ q^{2m} \) q^{o(m)}.
$$
\end{thm}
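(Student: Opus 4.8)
The plan is to exploit that $\cM := \{f \in \F_{q^r} : f \sim m\}$ is an $\F_q$-subspace of dimension $m$ and that $E_{q,r}^{\inv}(m)$ is exactly the additive energy of the set $\cN = \{u^{-1} : u \in \cM\setminus\{0\}\}$. Fixing a nontrivial additive character $\psi$ of $\F_{q^r}$ and putting $K(\lambda) = \sum_{u \in \cM\setminus\{0\}}\psi(\lambda/u)$, orthogonality gives
\[
E_{q,r}^{\inv}(m) = \frac{1}{q^r}\sum_{\lambda \in \F_{q^r}}|K(\lambda)|^4 ,
\]
where the term $\lambda = 0$ contributes at most $(q^m-1)^4/q^r \le q^{4m-r} \le q^{(7m-r)/2}$; so everything comes down to the fourth moment of $K$ over $\lambda \ne 0$. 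I will also use $\sum_{\lambda}|K(\lambda)|^2 = q^r(q^m-1)$ and the trivial bound $E_{q,r}^{\inv}(m)\le q^{3m}$.

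For $m$ small relative to $r$ (say $3m\le r$) I would work entirely over $\F_q[X]$: clearing denominators in $u^{-1}+v^{-1}\equiv x^{-1}+y^{-1}\pmod F$ gives $(u+v)xy\equiv(x+y)uv\pmod F$ between polynomials of degree $<3m\le r=\deg F$, forcing the genuine identity $(u+v)xy=(x+y)uv$ in $\F_q[X]$. If $u+v=0$ this forces $x+y=0$, contributing $\ll q^{2m}$ quadruples; if $u+v\ne0$, the substitution $X=(u+v)x-uv$, $Y=(u+v)y-uv$ turns the identity into $XY=(uv)^2$, so for each admissible $(u,v)$ the number of $(x,y)$ is at most the number of factorisations of $(uv)^2$ in $\F_q[X]$, which is $q^{o(m)}$. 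Hence $E_{q,r}^{\inv}(m)\le q^{2m+o(m)}$, matching the asserted bound since $q^{2m}\ge q^{(7m-r)/2}$ in this range.

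For $m$ large I would complete the sum defining $K(\lambda)$ through $\cM$. With $\cM^{\perp}=\{\mu\in\F_{q^r}:\psi(\mu u)=1\ \forall u\in\cM\}$, of dimension $r-m$, one has $\mathbf 1_{\cM}(u)=q^{m-r}\sum_{\mu\in\cM^\perp}\psi(\mu u)$, whence
\[
K(\lambda)=q^{m-r}\sum_{\mu\in\cM^\perp}\mathrm{Kl}(\mu,\lambda),\qquad \mathrm{Kl}(\mu,\lambda)=\sum_{w\ne0}\psi(\mu w+\lambda/w),
\]
an honest Kloosterman sum over $\F_{q^r}$. Weil's bound $|\mathrm{Kl}(\mu,\lambda)|\le 2q^{r/2}$ (valid for every $\mu$ once $\lambda\ne0$) gives $|K(\lambda)|\le 2q^{r/2}$ for $\lambda\ne0$; combining this with $\sum_\lambda|K(\lambda)|^2\le q^{r+m}$ yields $\sum_{\lambda\ne0}|K(\lambda)|^4\le 4q^{2r+m}$ and hence $E_{q,r}^{\inv}(m)\ll q^{4m-r}+q^{r+m}$, which is of the required size once $m$ is not too small relative to $r$.

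The crux is the intermediate range of $m$, where the polynomial identity of the second paragraph fails (the degrees exceed $r$) while the Weil bound $|K(\lambda)|\le 2q^{r/2}$ is lossy (it overshoots the heuristic size $q^{m/2}$ of $K(\lambda)$). Following the Bourgain–Garaev strategy, I would split $E_{q,r}^{\inv}(m)$ according to whether $D:=(u+v)xy-(x+y)uv$ vanishes identically in $\F_q[X]$ or is a nonzero polynomial multiple of $F$ of degree $<3m$, hence with quotient of degree $<3m-r$. The first piece is the ``$\F_q(X)$-diagonal'', bounded by $q^{2m+o(m)}$ exactly as before; the second piece, with the trivial diagonal removed, is the one to which the completed-sum/Weil input applies efficiently, once one also sums over the $\ll q^{3m-r}$ quotients. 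Carrying out this last estimate uniformly in $m$, with all losses absorbed into $q^{o(m)}$, is the delicate part, and it is there that the exponent $(7m-r)/2$ — which interpolates between $q^{2m}$ and the trivial $q^{3m}$ — appears.
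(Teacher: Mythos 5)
Your proposal does not follow the paper's argument, and as written it has a genuine gap: the two regimes you actually complete are $3m\le r$ (the polynomial identity $(u+v)xy=(x+y)uv$ plus the divisor bound, giving $q^{2m+o(m)}$) and, via completion and the Weil bound $|K(\lambda)|\le 2q^{r/2}$, the bound $E_{q,r}^{\inv}(m)\ll q^{r+m}+q^{4m-r}$, which is only $\le q^{(7m-r)/2}$ when $m\ge 3r/5$. For $r/3<m<3r/5$ you offer only a sketch (splitting on whether $D=(u+v)xy-(x+y)uv$ is identically zero or a nonzero multiple of $F$ of degree $<3m$), and you yourself flag the uniform estimate over the $\ll q^{3m-r}$ quotients as ``the delicate part''; nothing in the proposal actually produces the exponent $(7m-r)/2$ in that range, so the theorem is not proved there.

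The paper's proof is shorter and uniform in $m$, and it isolates exactly the ingredient your write-up is missing: a pointwise bound on the number $I_F(a,m)$ of representations $a=u^{-1}+v^{-1}$ with $u,v\sim m$. Writing the equation as $(u-a^{-1})(v-a^{-1})=a^{-2}$ reduces this to counting points of a hyperbola $uv=\mathrm{const}$ in (a translate of) the box of degree $<m$, for which Lemma~\ref{lem:Hyperb} (the Cilleruelo--Shparlinski concentration bound) gives $I_F(a,m)\le (q^{(3m-r)/2}+1)q^{o(m)}$ for all $a\ne 0$ and all $m\le r$. Then
$$E_{q,r}^{\inv}(m)=\sum_a I_F(a,m)^2\le I_F(0,m)^2+\max_{a\ne 0}I_F(a,m)\sum_a I_F(a,m)\le q^{2m}+\bigl(q^{(3m-r)/2}+1\bigr)q^{2m+o(m)},$$
which is the stated bound; note $(3m-r)/2+2m=(7m-r)/2$. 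Your small-$m$ factorization count is essentially the ``$+1$'' (divisor-bound) case of this lemma; what you lack is its extension to $3m>r$, where the hyperbola can have up to $q^{(3m-r)/2+o(m)}$ points in the box. If you want to salvage your Fourier set-up instead, you would still need an input of this strength for the intermediate range; the Weil bound alone cannot supply it.
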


As before, we see that Theorem~\ref{thm:AddEnergy-inv} always 
improves the trivial bound $E_{q,r}^{\inv}(m)  \ll q^{3m}$.


\subsection{Bilinear sums with square roots and reciprocals}
We start with bounds on the sum 
$$W_{q,r}^{\sqrt}(\balpha, \bbeta; m,n) = \sum_{f \sim m}\sum_{g \sim n}\alpha_f \beta_g \sum_{\substack{h\in \F_{q^r}\\h^2=fg}}\psi(h) $$
where $\psi$ is a fixed nontrivial additive character of $\F_{q^r}$. 

One would naturally look to improve upon the trivial bound
\begin{equation} \label{eq:TrivBound-W}
W_{q,r}^{\sqrt}(\balpha, \bbeta; m,n) = O(\|\balpha\|_1|\bbeta\|_1). 
\end{equation} 

The following result does so, in certain ranges of $m$ and $n$.

\begin{thm}\label{thm:BilinearBound-sqrt}
For any positive integers $m,n \leq r$ and any weights as in~\eqref{Weights} we have
\begin{align*}
    |W_{q,r}^{\sqrt}(\balpha, \bbeta; m,n)|
     \leq \|\balpha\|_2|\bbeta\|_1^{3/4}|\bbeta\|_\infty^{1/4}&q^{r/8 + 5m/16 + n/16 + o(r)}\\
    &\(q^{m/8-r/16} + 1\)\(q^{n/8-r/16} + 1\). 
\end{align*}
\end{thm}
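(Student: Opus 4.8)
The plan is to reduce the bilinear sum $W_{q,r}^{\sqrt}(\balpha,\bbeta;m,n)$ to the square-root energy bound of Theorem~\ref{thm:AddEnergy-sqrt}, following the familiar route through Cauchy--Schwarz and completion. First I would apply the Cauchy--Schwarz inequality in the variable $f$, pulling out $\|\balpha\|_2$ and leaving a sum over $f \sim m$ (completed to all $f \in \F_{q^r}$ with a nonnegative smoothing) of $\left| \sum_{g \sim n} \beta_g \sum_{h^2 = fg} \psi(h) \right|^2$. Expanding the square introduces two copies $g_1, g_2$ of the $g$-variable and two square-root variables $h_1, h_2$ with $h_1^2 = f g_1$, $h_2^2 = f g_2$; summing the resulting character sum over $f$ (now unrestricted) forces a relation between $h_1^2/g_1$ and $h_2^2/g_2$, i.e. $h_2^2 g_1 = h_1^2 g_2$. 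After a change of variables this should be arranged so that the count is governed by a quantity of the shape $\sum \beta_{g_1}\ov\beta_{g_2}\,(\text{character in } h_1 - \lambda h_2)$, and a further completion in the remaining additive variable turns the diagonal contribution into precisely a weighted square-root energy $E_{q,r}^{\sqrt}(\bbeta')$ for a suitable reweighting $\bbeta'$ supported on $g \sim n$, plus an off-diagonal term.

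Concretely, after the first Cauchy--Schwarz and completion in $f$, one is left bounding something like $q^{r-m}$ times $\sum_{g_1,g_2 \sim n} \beta_{g_1}\ov\beta_{g_2} \sum_{\substack{h_1^2 g_2 = h_2^2 g_1}} \psi(h_1 - h_2)$, and the inner system $h_1^2 g_2 = h_2^2 g_1$ is exactly the kind of multiplicative constraint whose additive fibers are counted by $E_{q,r}^{\sqrt}$. I would invoke Theorem~\ref{thm:AddEnergy-sqrt} with the weight $\bbeta$ (or $\bm 1_n$ after extracting $\|\bbeta\|_\infty$), getting a contribution of size roughly $\|\bbeta\|_1^2\|\bbeta\|_\infty^2\, q^{n/2+o(n)}(q^{n-r/2}+1)$, then take the square root and reinsert the $\|\balpha\|_2$ and the completion factor $q^{(r-m)/2+o(r)}$ coming from the $f$-sum. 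Balancing the arithmetic of exponents $\tfrac{r-m}{2} + \tfrac{1}{2}\big(\tfrac{n}{2} + (n - \tfrac r2)\big)$ and its variants should reproduce the stated exponent $r/8 + 5m/16 + n/16$ together with the two correction brackets $(q^{m/8 - r/16}+1)(q^{n/8 - r/16}+1)$; the $\|\bbeta\|_\infty^{1/4}$ and $\|\bbeta\|_1^{3/4}$ arise from distributing the $\|\bbeta\|_1^2\|\bbeta\|_\infty^2$ of the energy bound across two square roots (one from each Cauchy--Schwarz/completion step), with one extra power-saving completion absorbing a $\|\bbeta\|_1^{1/2}$ versus $\|\bbeta\|_\infty^{1/2}$ trade.

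The main obstacle I anticipate is bookkeeping the completion technique cleanly in the function-field setting: completing a sum over $\{f : f \sim m\}$ to all of $\F_{q^r}$ via additive characters of $\F_{q^r}$ introduces a kernel whose $L^1$ mass is $q^{(r-m)\cdot(\text{something})+o(r)}$, and getting the exact exponent (and making sure the nonnegative majorant really does dominate, so the energy bound applies to a genuine sum of the form $E_{q,r}^{\sqrt}(\bbeta')$ rather than to an oscillatory variant) requires care with the analogue of the incomplete-to-complete sum inequality. A secondary subtlety is handling the fibers where $fg$ is not a square in $\F_{q^r}$ (so the inner sum over $h$ is empty) and the degenerate loci $f = 0$ or $g = 0$ or $h = 0$, which contribute only lower-order terms but must be separated out before the character orthogonality is applied. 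Once the completion constant is pinned down, the rest is the routine optimization that yields the clean exponents in the statement.
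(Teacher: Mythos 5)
Your outline starts the same way as the paper (Cauchy--Schwarz in $f$, pulling out $\|\balpha\|_2$, then aiming at Theorem~\ref{thm:AddEnergy-sqrt}), but the middle step goes wrong in a way that matters. Completing the $f$-sum to all of $\F_{q^r}$ is not the right move: the restriction $f\sim m$ is precisely the information that produces the $m$-dependence of the final bound, and your proposed reduction to a single sum $\sum_{g_1,g_2}\beta_{g_1}\ov{\beta}_{g_2}\sum_{h_1^2g_2=h_2^2g_1}\psi(h_1-h_2)$ (with a prefactor like $q^{r-m}$, whose sign is in any case backwards for a majorant) discards it. Moreover the relation $h_1^2g_2=h_2^2g_1$ is just elimination of $f$, not an orthogonality phenomenon, and the resulting sum is still oscillatory --- it is not itself an energy, so one cannot "invoke Theorem~\ref{thm:AddEnergy-sqrt}" on it directly. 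The decisive symptom is that you apply the energy bound only once (to the $g$-side), whereas the stated estimate visibly requires it twice: the two symmetric brackets $(q^{m/8-r/16}+1)(q^{n/8-r/16}+1)$ and the denominators $8$ and $16$ come from an eighth root of a product containing both $E_{q,r}^{\sqrt}(\bm{1}_m)$ and $E_{q,r}^{\sqrt}(\bbeta)$. Your exponent bookkeeping $\tfrac{r-m}{2}+\tfrac12(\tfrac n2+(n-\tfrac r2))$ cannot be massaged into $r/8+5m/16+n/16$.

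What is actually needed after the first Cauchy--Schwarz is: split according to the quadratic character of $f,g_1,g_2$ so that all three variables can be reparametrized by their square roots; this turns the double sum into $\tfrac12\sum_{\lambda}A_\lambda Q_\lambda(\bbeta)$ with $A_\lambda=\sum_{f\sim m}\sum_{t^2=f}\psi(t\lambda)$ and $Q_\lambda(\bbeta)=\sum_{u-v=\lambda}\beta_{u^2}\ov{\beta}_{v^2}$. The restriction $f\sim m$ now lives inside $A_\lambda$, and orthogonality in $\lambda$ gives $\sum_\lambda|A_\lambda|^4=q^rE_{q,r}^{\sqrt}(\bm{1}_m)$, while $\sum_\lambda|Q_\lambda(\bbeta)|^2=E_{q,r}^{\sqrt}(\bbeta)$ and $\sum_\lambda|Q_\lambda(\bbeta)|\ll\|\bbeta\|_1^2$. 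A H\"older step with exponents $(4,4,2)$ then yields $|R_1|^4\le q^r\|\bbeta\|_1^4E_{q,r}^{\sqrt}(\bm{1}_m)E_{q,r}^{\sqrt}(\bbeta)$, and two applications of Theorem~\ref{thm:AddEnergy-sqrt} give the claimed bound after taking the eighth root. Without the $A_\lambda$ fourth-moment identity and the double use of the energy theorem, your argument has no route to the stated exponents.
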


Clearly in the most interesting range $m,n \le r/2$ and for the weights $|\alpha_f|, |\beta_g| \le 1$ the bound 
of Theorem~\ref{thm:BilinearBound-sqrt} becomes 
$$
|W_{q,r}^{\sqrt}(\balpha, \bbeta; m,n)|
     \leq  q^{r/8 + 13m/16 + 13n/16  + o(r)}, 
$$
which is better than the trivial bound~\eqref{eq:TrivBound-W} provided that 
$$
m+n \ge 2r/3.
$$

Similarly, let 
$$W_{q,r}^{\inv}(\balpha, \bbeta; m,n) = \sum_{f \sim m}\sum_{g \sim n}\alpha_f \beta_g 
\psi\(f^{-1}g^{-1}\), $$
where as before $\psi$ is a fixed nontrivial additive character of $\F_{q^r}$. 

We prove the following:

\begin{thm}\label{thm:BilinearBound-inv}
For any positive integers $m, n\leq r$ and any weights as in~\eqref{Weights} we have
\begin{align*}
 W_{q,r}^{\inv}(\balpha, \bbeta; m,n)  
    \leq \|\balpha\|_\infty \|\bbeta\|_\infty & q^{r/8+3m/4 + 3n/4 + o(r)}\\
    & \(q^{3m/16-r/16} +1\)\( q^{3m/16-r/16}+1\). 
\end{align*}
\end{thm}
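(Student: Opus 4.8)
The plan is to follow the template used for Theorem~\ref{thm:BilinearBound-sqrt}: apply Cauchy--Schwarz to eliminate one weight sequence, identify the resulting diagonal term with a fourth moment of an incomplete additive character sum over a set of reciprocals, and then insert the energy bound of Theorem~\ref{thm:AddEnergy-inv}. Throughout, put $\cU_m=\{f^{-1}:~f\sim m,\ f\neq 0\}$ and $\cV_n=\{g^{-1}:~g\sim n,\ g\neq 0\}$, so that $|\cU_m|\le q^m$ and $|\cV_n|\le q^n$; since inversion is a bijection, the additive energy of $\cU_m$ equals $E_{q,r}^{\inv}(m)$ and the additive energy of $\cV_n$ equals $E_{q,r}^{\inv}(n)$. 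This identification is the conceptual core of the argument.

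First I would apply Cauchy--Schwarz in the variable $f$ (which we may assume to be nonzero, and likewise $g$), then expand the square and substitute $t=f^{-1}\in\cU_m$, $u=g_1^{-1}$, $v=g_2^{-1}\in\cV_n$, to obtain
\begin{align*}
\bigl|W_{q,r}^{\inv}(\balpha,\bbeta;m,n)\bigr|^2
&\le \|\balpha\|_\infty^2\,q^m\sum_{\substack{f\sim m\\ f\neq 0}}\Bigl|\sum_{\substack{g\sim n\\ g\neq 0}}\beta_g\,\psi\bigl(f^{-1}g^{-1}\bigr)\Bigr|^2\\
&= \|\balpha\|_\infty^2\,q^m\sum_{u,v\in\cV_n}\beta_{u^{-1}}\ov{\beta}_{v^{-1}}\,S_m(u-v),
\end{align*}
where $S_m(c)=\sum_{t\in\cU_m}\psi(tc)$. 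Bounding the weights trivially, the last double sum is at most $\|\bbeta\|_\infty^2\sum_{c\in\F_{q^r}}\nu_n(c)\,|S_m(c)|$, where $\nu_n(c)=\#\{(u,v)\in\cV_n^2:~u-v=c\}$.

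Next I would split $\sum_c\nu_n(c)|S_m(c)|$ by H\"older's inequality with exponents $4$ and $4/3$. Orthogonality yields the exact identity $\sum_c|S_m(c)|^4=q^r E_{q,r}^{\inv}(m)$, and $\ell^p$-interpolation yields $\sum_c\nu_n(c)^{4/3}\le\bigl(\sum_c\nu_n(c)\bigr)^{2/3}\bigl(\sum_c\nu_n(c)^2\bigr)^{1/3}\le q^{4n/3}\,E_{q,r}^{\inv}(n)^{1/3}$, using that $\sum_c\nu_n(c)=|\cV_n|^2$ and that $\sum_c\nu_n(c)^2$ is the additive energy of $\cV_n$. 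Combining everything and taking a square root gives
$$
\bigl|W_{q,r}^{\inv}(\balpha,\bbeta;m,n)\bigr|\le\|\balpha\|_\infty\|\bbeta\|_\infty\,q^{(m+n)/2+r/8}\bigl(E_{q,r}^{\inv}(m)\,E_{q,r}^{\inv}(n)\bigr)^{1/8}.
$$
Inserting $E_{q,r}^{\inv}(m)\le\bigl(q^{(7m-r)/2}+q^{2m}\bigr)q^{o(m)}$ (and the same for $n$) from Theorem~\ref{thm:AddEnergy-inv}, and rewriting $\bigl(q^{(7m-r)/2}+q^{2m}\bigr)^{1/8}\le q^{m/4+o(m)}\bigl(q^{3m/16-r/16}+1\bigr)$, then gives the stated bound.

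The steps requiring care are essentially bookkeeping: one must treat the zero polynomials consistently so that $\cU_m$, $\cV_n$ and the energies $E_{q,r}^{\inv}$ match up, and, more importantly, the H\"older split must be arranged so that \emph{both} $E_{q,r}^{\inv}(m)$ and $E_{q,r}^{\inv}(n)$ are used. The more obvious move---completing the inner sum over $f$ to all of $\F_{q^r}$---exploits only the energy of $\cV_n$ and produces a weaker, asymmetric bound; routing the other factor through the fourth-moment identity for $S_m$ is precisely what makes the final estimate symmetric in $m$ and $n$ and as strong as claimed. The remaining ingredients are standard orthogonality relations and elementary inequalities.
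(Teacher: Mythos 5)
Your argument is correct and delivers exactly the stated bound, but it reaches it by a different route than the paper. The paper applies Cauchy--Schwarz twice, first in $f$ and then in the pair $(g_1,g_2)$, which symmetrizes the problem into the complete bilinear sum $\sum_{u,v}I_F(u,m)I_F(v,n)\psi(uv)$ weighted by the representation functions $I_F(\cdot,m)$, $I_F(\cdot,n)$ of differences of reciprocals; it then invokes the standard bilinear bound of Lemma~\ref{lem:Vinogradovbound} to produce $q^{r/2}\|I_F(\cdot,m)\|_2\|I_F(\cdot,n)\|_2$, and the two $\ell^2$-norms are precisely $E_{q,r}^{\inv}(m)^{1/2}$ and $E_{q,r}^{\inv}(n)^{1/2}$. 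You instead stop after one Cauchy--Schwarz and run a H\"older argument with exponents $(4,4/3)$, extracting the fourth moment $\sum_c|S_m(c)|^4=q^rE_{q,r}^{\inv}(m)$ by orthogonality and interpolating $\|\nu_n\|_{4/3}$ between $\|\nu_n\|_1$ and $\|\nu_n\|_2^2=E_{q,r}^{\inv}(n)$ (the difference energy equals the sum energy, so this identification is legitimate). Both routes land on the identical intermediate inequality
$$
|W_{q,r}^{\inv}(\balpha,\bbeta;m,n)|\le\|\balpha\|_\infty\|\bbeta\|_\infty\,q^{(m+n)/2+r/8}\bigl(E_{q,r}^{\inv}(m)\,E_{q,r}^{\inv}(n)\bigr)^{1/8},
$$
so nothing is gained or lost quantitatively. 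What your version buys is uniformity of method: it is essentially the same H\"older-plus-fourth-moment template the paper uses for the square-root sums in Theorem~\ref{thm:BilinearBound-sqrt}, and it bypasses Lemma~\ref{lem:Vinogradovbound} entirely; the paper's version buys a cleaner symmetric reduction in which the roles of $m$ and $n$ are manifestly interchangeable before any moment estimates are made. Your side remarks are also on point: the zero-polynomial bookkeeping is harmless, and you correctly identify that completing the $f$-sum too early would waste the energy $E_{q,r}^{\inv}(m)$ and give a weaker bound.
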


For $m,n \le r/3$ and for the weights $|\alpha_f|, |\beta_g| \le 1$ the bound 
of Theorem~\ref{thm:BilinearBound-inv} becomes 
$$
|W_{q,r}^{\sqrt}(\balpha, \bbeta; m,n)|
     \leq  q^{r/8+3m/4 + 3n/4 + o(r)},
$$
which is better than an analogue of~\eqref{eq:TrivBound-W} provided that 
$$
m+n \ge r/2.
$$

\subsection{Application to special polynomials in residue classes}
Directly analogous to the definition for integers, for any positive real number $k$,  we call a polynomial $f(X) \in \F_{q}[X]$ {\it $k$-smooth} if $f$ has no irreducible factors of degree exceeding $k$. 

Recalling that $F$ is some irreducible polynomial of degree $r$ over $\F_q$, for any real $\alpha > 0$ we denote by $M_{\alpha, q}(F)$ the smallest integer such that any non-zero residue class in $\F_q(X)/F(X)$ contains an $\alpha r$-smooth square-free representative whose degree does not exceed $M_{\alpha, q}(F)$. We formally set $M_{\alpha, q}(F) = \infty$ if no such representative exists. To the authors' knowledge, it is not known exactly for which $F$ we have $M_{\alpha, q}(F) < \infty$, even for the case $\alpha = 1$.

\begin{thm}\label{Thm:MBound}
As $r \to \infty$, for any fixed $\alpha > 0 $ we have that for every monic, irreducible polynomial $F(X) \in \F_q[X]$ of degree $r$,
$$M_{\alpha, q}(F) \leq (2+o(1)) r. $$
\end{thm}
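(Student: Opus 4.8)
The plan is to show that every non-zero residue class $c$ modulo $F$ contains a representative of the form $\pi_1\cdots\pi_k$, where $\pi_1,\dots,\pi_k$ are \emph{distinct} monic irreducible polynomials of one common degree $d_0$, for a suitable constant $k=k(\alpha)$ and a suitable $d_0\le\alpha r$ with $kd_0=2r+O(\log r)$. Any such product is $\alpha r$-smooth (each $\pi_i$ has degree $d_0\le\alpha r$), square-free (the $\pi_i$ are distinct), and of degree $kd_0=(2+o(1))r$; so producing one in every class yields $M_{\alpha,q}(F)\le(2+o(1))r$.

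Fix a non-zero $c\in\F_q[X]/F(X)\cong\F_{q^r}^{\times}$, and for parameters $k$ and $d_0<r$ to be chosen let $T(c)$ be the number of ordered tuples $(\pi_1,\dots,\pi_k)$ of (not necessarily distinct) monic irreducibles of degree $d_0$ with $\pi_1\cdots\pi_k\equiv c\Mod F$. Since $\deg\pi_i=d_0<r=\deg F$, every such $\pi_i$ is coprime to $F$, so orthogonality of the multiplicative characters $\chi$ of $(\F_q[X]/F(X))^{\times}$ gives
$$
T(c)=\frac{1}{q^r-1}\sum_{\chi}\overline{\chi(c)}\,S_\chi^{\,k},
$$
where $S_\chi$ denotes the sum of $\chi(\pi)$ over all monic irreducible $\pi$ of degree $d_0$. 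The principal character contributes the main term $\pi_q(d_0)^k/(q^r-1)$, with $\pi_q(d_0)\sim q^{d_0}/d_0$ the number of monic irreducibles of degree $d_0$; this is $\gg q^{kd_0-r}/d_0^{\,k}$.

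For a non-principal $\chi$ I would invoke the Riemann Hypothesis for function fields: as $F$ is irreducible, $\chi$ is primitive and $L(u,\chi)$ is a polynomial of degree $r-1$ all of whose inverse roots have absolute value at most $q^{1/2}$, whence the sum of $\Lambda(f)\chi(f)$ over $\deg f=d_0$ is $O(rq^{d_0/2})$ and, after removing prime-power contributions, $|S_\chi|\ll rq^{d_0/2}/d_0$ with an absolute implied constant — in particular uniformly in $F$. Hence the non-principal characters contribute $\ll(r/d_0)^kq^{kd_0/2}$ to $T(c)$, and the correction for passing from ordered tuples to tuples with distinct entries is smaller than the main term by a factor $\gg q^{d_0}$. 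Now choose $k=\max(3,\lceil2/\alpha\rceil+1)$, a fixed integer with $k\ge3$ and $k>2/\alpha$, and then $d_0=\lceil2r/k\rceil+C_\alpha\lceil\log_q r\rceil$ with the constant $C_\alpha$ large enough that $kd_0\ge2r+(k+1)\log_q(2r)$. For $r$ large one checks $d_0<\min(\alpha r,r)$, there being slack of order $r$ between $2r/k$ and $\alpha r$ to absorb the $O(\log r)$ term; and the inequality $kd_0\ge2r+(k+1)\log_q(2r)$ makes $q^{kd_0/2-r}$ grow faster than the polynomial-in-$r$ factors $d_0^{\,k}$ and $r^k$, so that the main term $\gg q^{kd_0-r}/d_0^{\,k}$ dominates both the non-principal contribution and the distinctness correction. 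Therefore $T(c)>0$, and the representative so obtained has degree $kd_0=2r+O_\alpha(\log r)=(2+o(1))r$.

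The crux is the tightness of the exponents. The main term has size about $q^{kd_0-r}$, while the non-principal characters, even after using $|S_\chi|\ll rq^{d_0/2}/d_0$, can contribute up to about $q^{kd_0/2}$; for $kd_0\approx2r$ both quantities are of order $q^r$, and the argument survives only because one may take $kd_0$ a logarithmic amount beyond $2r$ while still keeping both $d_0\le\alpha r$ and $d_0<r$ — reconciling these constraints is the only delicate point. (Alternatively one could detect the congruence $\pi_1\cdots\pi_k\equiv c$ through reciprocals and appeal to Theorem~\ref{thm:BilinearBound-inv}, but the direct appeal to the function-field Riemann Hypothesis for the one-variable sums $S_\chi$ is simpler and yields the optimal constant $2$.)
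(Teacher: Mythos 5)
Your argument is correct in substance but takes a genuinely different route from the paper. You detect the residue class with a $k$-fold product of distinct irreducibles of a single common degree $d_0\approx 2r/k$, and you estimate the complete character sum by raising the Weil bound $|S_\chi|\ll (r/d_0)\,q^{d_0/2}$ (which is the paper's Lemma~\ref{lem:MonicIrredSum}) to the $k$-th power; the main term $\asymp q^{kd_0-r}/d_0^k$ beats the error $\asymp (r/d_0)^k q^{kd_0/2}$ precisely when $kd_0>2r+O(k\log_q r)$, and this is where the constant $2$ arises. The paper instead writes the representative as $suv$ with $s$ square-free of small degree $T=\lfloor 2r/n\rfloor$ and $u,v$ drawn from a set $\cU$ of products of distinct irreducibles: it applies square-root cancellation (Lemma~\ref{lem:SqFreeSum}) only to the short factor $s$ and disposes of the two long factors losslessly via the orthogonality identity $\sum_{\chi}\left|\sum_{u\in\cU}\chi(u)\right|^2=(q^r-1)\#\cU$, so its threshold $T/2+W>r$ again forces total degree $T+2W>2r$. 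Both methods land on the same constant; yours is more direct and builds square-freeness into the construction (distinct equal-degree factors), whereas the paper must separately bound the contribution of products $suv$ that fail to be square-free.

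Two quantitative points need tightening. First, your stated requirement $kd_0\ge 2r+(k+1)\log_q(2r)$ gives only $q^{kd_0/2-r}\ge (2r)^{(k+1)/2}$, which does not dominate the factor $r^k$ appearing in the ratio of main term to error for $k\ge 3$; you need $kd_0\ge 2r+2k\log_q r+O(1)$. Since your free constant $C_\alpha$ can be taken large, this is cosmetic and does not disturb $kd_0=(2+o(1))r$. Second, the distinctness correction is not simply ``smaller by a factor $q^{d_0}$'': fixing $\pi_1=\pi_2=\pi$ and counting the remaining $(k-2)$-tuples with product congruent to $c\pi^{-2}$ by the same character-sum expansion yields, besides a diagonal piece that is indeed down by a factor of order $q^{d_0}/d_0$, an off-diagonal piece of order $(r^{k-2}/d_0^{\,k-1})\,q^{kd_0/2}$, which is dominated by the main term only by virtue of the same inequality $kd_0>2r+O(k\log_q r)$. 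So the correction is controlled, but by the threshold condition rather than by the trivial saving you cite; with these repairs the proof is complete.
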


Now for any $a(X) \in \F_q[X]$ and positive integers $k$ and $m$ we define $\Psi(k,m;F, a)$ to be the number of $g(X) \in \F_q[X]$ satisfying
$$
g \equiv a \Mod{F}, \quad  \deg g < k, \ g \text{ is $m$-smooth}
$$
and similarly $\Psi^\#(k,m;F, a) $ to count those $g(X) \in \F_q[X]$  satisfying
\begin{equation} \label{eq:psisfcongruence}
     g \equiv a \Mod{F}, \quad  \deg g  < k,\ g \text{ is $m$-smooth and square-free}.
\end{equation}  
We remark that in the above we do not use the notation $g \sim k$ as it is defined
for polynomials in the residue ring $\F_q[X]/F(X)$ (thus makes sense 
only for $k \le r$), while here  $g(X) \in \F_q[X]$ and can be of degree much larger than $r$. 

We follow closely the ideas in~\cite{MSY} to derive the following lower bound on $\Psi^\#(k,m;F, a)$:

\begin{thm}\label{thm:Psi-LB}
For any fixed real numbers $\alpha$ and $\beta$ with $\beta \in (23/24, 1]$ and $\alpha \in (9/2-3\beta, 3\beta]$, and for any positive reals $k,m$  with $r\alpha \leq k \leq r(\alpha + o(1))$ and $r\beta \leq m \leq r(\beta + o(1))$ we have
$$\Psi^\#(k,m;F, a) \geq q^{k-r+o(r)}$$
for every monic, irreducible polynomial $F(X) \in \F_q[X]$ of degree $r$, as $r \to \infty$. 
\end{thm}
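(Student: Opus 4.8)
The plan is to follow the circle method / Fourier expansion approach of~\cite{MSY}, transplanted to the function-field setting, using the bilinear bound of Theorem~\ref{thm:BilinearBound-inv} as the main analytic input. Writing $\psi$ for a nontrivial additive character of $\F_{q^r} \cong \F_q[X]/F(X)$, one detects the congruence $g \equiv a \pmod F$ via orthogonality, so that
\begin{equation*}
\Psi^\#(k,m;F,a) = \frac{1}{q^r}\sum_{\lambda \in \F_{q^r}} \psi(-\lambda a) \sum_{\substack{g \text{ monic, } \deg g < k \\ g \text{ $m$-smooth, square-free}}} \psi(\lambda g).
\end{equation*}
The $\lambda = 0$ term contributes the main term, namely $q^{-r}$ times the count of $m$-smooth square-free polynomials of degree $< k$; by the function-field analogue of the Dickman–de Bruijn and square-free density estimates (valid since $m \ge \beta r$ with $\beta$ bounded below, so $k/m$ is bounded), this is $q^{k-r+o(r)}$, which is exactly the claimed lower bound. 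So the whole proof reduces to showing the contribution of $\lambda \neq 0$ is $o(q^{k-r})$, i.e. strictly smaller than the main term.

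For the error term, first I would reduce the $m$-smooth square-free weighted sum over $g$ to bilinear form by the standard hyperbola / Dirichlet-hyperbola manipulation: a square-free $m$-smooth $g$ of degree $< k$ can be factored as $g = fh$ in a controlled number of ways with $f$ ranging over an interval of degrees near $k/2$ and $h$ over the complementary range, each carrying divisor-bounded weights $\alpha_f,\beta_h$ with $\|\balpha\|_\infty,\|\bbeta\|_\infty \le q^{o(r)}$; the square-free and smoothness constraints are encoded by Möbius-type sieve weights (again $q^{o(r)}$-bounded). This turns $\sum_g \psi(\lambda g)$ into a sum of $O(q^{o(r)})$ bilinear expressions of exactly the shape $W_{q,r}^{\inv}$ after the further substitution that replaces the product $fh$ inside the character by $f^{-1}h^{-1}$ — here one uses that $\psi(\lambda fh)$ over the residue ring, summed against multiplicative reparametrisations, is governed by the reciprocal bilinear sum; alternatively one keeps the multiplicative sum $\psi(\lambda fh)$ and notes it is the same object up to relabelling since Theorem~\ref{thm:BilinearBound-inv} bounds $\sum_f\sum_g \alpha_f\beta_g \psi(f^{-1}g^{-1})$ and the map $f \mapsto f^{-1}$ is a bijection on nonzero classes. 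Applying Theorem~\ref{thm:BilinearBound-inv} with $m \approx n \approx k/2$ (legitimate once $k/2 \le r$, guaranteed by $\alpha \le 3\beta \le 3$ and a short check that the relevant degrees stay below $r$) gives each bilinear piece a bound of size
\begin{equation*}
q^{r/8 + 3m/4 + 3n/4 + o(r)}\bigl(q^{3m/16-r/16}+1\bigr)\bigl(q^{3n/16-r/16}+1\bigr),
\end{equation*}
and summing over the $O(q^r)$ values of $\lambda$ and dividing by $q^r$ returns an error term of size $q^{\Theta + o(r)}$ for an explicit exponent $\Theta$ linear in $\alpha,\beta$.

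The final step is the bookkeeping that forces $\Theta < k - r$, i.e. $\Theta/r < \alpha - 1$; this is precisely where the numerical constraints $\beta > 23/24$ and $9/2 - 3\beta < \alpha \le 3\beta$ come from. With $k \approx \alpha r$ split as $m+n$ with $m,n \approx \alpha r/2$, the dominant case of the product of the two bracketed factors (which is whichever of $q^{3m/16-r/16}$, $q^{3n/16-r/16}$, or $1$ is largest) must be balanced against $q^{r/8+3k/8}$; chasing through, the worst case yields an exponent like $r/8 + 3k/8 + 2(3k/16 - r/16) = 3k/4$ roughly, or $r/8+3k/8$ if the brackets contribute $1$, and in either case requiring this to be below $k-r$ (plus $o(r)$) after subtracting the outer $q^r$ and adding it back translates into a lower bound on $\alpha$ in terms of $\beta$ and an upper bound $\alpha \le 3\beta$ so that $m = \alpha r/2 \le 3\beta r/2$ keeps the smoothness parameter $m_{\text{bil}} \le r$. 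I expect the main obstacle to be exactly this optimisation: one must choose how to split $k = m_{\text{bil}} + n_{\text{bil}}$ (not necessarily evenly — an unbalanced split may help when $\alpha$ is small) and decide which of the four cross terms in the bracket product dominates in the given $(\alpha,\beta)$ window, and then verify that the resulting piecewise-linear constraint region is exactly $\{\beta \in (23/24,1],\ \alpha \in (9/2 - 3\beta, 3\beta]\}$. The smoothness/square-free density input and the reduction to a bilinear form are routine function-field analogues of~\cite{MSY}; the delicate part is the exponent arithmetic and confirming no case is lost.
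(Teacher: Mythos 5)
There is a genuine gap at the heart of your reduction to Theorem~\ref{thm:BilinearBound-inv}. After detecting the congruence with all $q^r$ additive characters and factoring $g = fh$, the inner object you must bound is $\sum_{f}\sum_{h}\alpha_f\beta_h\,\psi(\lambda fh)$ with $f$ and $h$ ranging over \emph{degree-restricted} sets (low-degree polynomials, possibly reduced modulo $F$). Your claim that this ``is the same object up to relabelling'' as $\sum_f\sum_h\alpha_f\beta_h\,\psi(f^{-1}h^{-1})$ because $f\mapsto f^{-1}$ is a bijection on $\F_{q^r}^*$ is not valid: inversion does not preserve the sets $\{f \sim m\}$, so after relabelling the weights become supported on the unstructured set of inverses of low-degree polynomials, and Theorem~\ref{thm:BilinearBound-inv} (whose entire content is that the \emph{arguments of the character} are inverses of elements from degree-restricted boxes) no longer applies. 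What remains available for $\psi(\lambda fh)$ is only a Cauchy--Schwarz/Vinogradov-type bound as in Lemma~\ref{lem:Vinogradovbound}, of size roughly $q^{r/2+(m+n)/2}$ per character; summed over all $\lambda$ and divided by $q^r$ this beats the main term $q^{k-r}$ only when $\alpha>3$, which is outside the admissible range $\alpha\le 3\beta\le 3$. So the error-term analysis collapses at exactly the step where the bilinear input was supposed to enter. (The final ``bookkeeping'' paragraph is also not carried out, but that is secondary to this structural problem.)

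The idea you are missing is the one the paper uses: rather than summing over all smooth square-free $g$ and expanding the full congruence, one \emph{constructs} representatives of the prescribed multiplicative shape $g=\ell_1\ell_2u$ with $\ell_1,\ell_2\in\cP_n$ irreducible of one fixed degree $n\approx(\alpha-\beta)r/2$ and $u$ square-free with $u\sim h$, $h\approx\beta r$, and counts solutions of $\ell_1\ell_2u=a$ in the residue ring (the quantity $N^\#_F(a,n,h)$ built from~\eqref{eq:Ncongruence}). Solving for $u=a\ell_1^{-1}\ell_2^{-1}$ and detecting the single condition $u\sim h$ by additive characters over a dual-basis subgroup (Lemma~\ref{lem:Nbound}) produces character sums $\psi(ba\ell_1^{-1}\ell_2^{-1})$ in which the \emph{inverses} of the degree-restricted variables $\ell_1,\ell_2$ genuinely appear --- this is what makes Theorem~\ref{thm:BilinearBound-inv} (via Lemma~\ref{lem:sumreciprocalirreducibles}) applicable, and the square-free condition is then sieved in via Lemmas~\ref{lem:simpleNbound} and~\ref{lem:Nboundsqfree}. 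Your main-term heuristic (density of smooth square-free polynomials) is fine in spirit, but without this restructuring the error term cannot be controlled by the tools in the paper.
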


\section{Energy Bounds}
\subsection{Preparations} To prove Theorems~\ref{thm:AddEnergy-sqrt} and~\ref{thm:AddEnergy-inv}, we
need the following two results given in~\cite{CillShp}. 

\begin{lem}\label{lem:f(x)=y}
Let $P$ be a polynomial of degree $2$ over $\F_{q^r}$. For any positive integer $m \leq r$, the number of solutions to the equation 
$$P(u)  = v , \qquad u, v \sim m,$$
is bounded by 
$\(q^{-m/2} + q^{-(r-m)/2}\)q^{m+o(m)}$.
\end{lem}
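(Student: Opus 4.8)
The plan is to reduce the quadratic equation $P(u) = v$ with $u, v \sim m$ to a counting problem about values of a single quadratic form, and then to estimate that count via character sums / lattice-point considerations adapted to the function field setting. First I would complete the square: writing $P(u) = a u^2 + b u + c$ with $a \neq 0$, after the substitution $u \mapsto u - b/(2a)$ (legitimate since $q$ is odd, though the lemma as stated does not require this — if $q$ is even one argues slightly differently, but here $P$ is arbitrary of degree $2$ so we may assume the leading term dominates) the equation becomes $a u^2 = v + (\text{const})$, i.e. essentially we are counting pairs $(u,v)$ in a box of ``size'' $q^m$ inside $\F_{q^r} = \F_q[X]/F(X)$ with $v$ lying in the image of $u \mapsto a u^2$ shifted by a constant. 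The key point is that $u \sim m$ and $v \sim m$ each describe an $\F_q$-subspace (or affine translate) of dimension $m$ inside the $r$-dimensional $\F_q$-space $\F_{q^r}$.

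Next I would detect the two membership conditions ($u \sim m$ after the shift, and $v \sim m$) by orthogonality. Concretely, using a fixed nontrivial additive character $\psi$ of $\F_{q^r}$ and an $\F_q$-bilinear trace pairing, the indicator of the condition ``$w \sim m$'' (for a shifted variable) is $q^{-(r-m)}\sum_{\lambda \in L} \psi(\lambda w)$ where $L$ is the $(r-m)$-dimensional annihilator subspace of the $m$-dimensional box. Substituting both indicators, the count becomes
$$
q^{-2(r-m)} \sum_{\lambda \in L_1} \sum_{\mu \in L_2} \sum_{u \in \F_{q^r}} \psi\bigl(\lambda(a u^2) + \mu u + (\text{const})\bigr),
$$
and the inner sum over $u$ is a Gauss-type quadratic sum over $\F_{q^r}$. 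For $\lambda \neq 0$ this inner sum has modulus exactly $q^{r/2}$ (standard evaluation of a nondegenerate quadratic additive character sum over a finite field), and for $\lambda = 0$ it is a linear sum, contributing $q^r$ when $\mu$ also forces triviality and $0$ otherwise.

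Assembling: the $\lambda = 0$ term contributes on the order of $q^{-2(r-m)} \cdot q^m \cdot q^r = q^{2m - r + m}$... — more carefully, one gets a main term of size about $q^{2m-r}$ plus the diagonal $q^m$ (the ``expected'' count), and the $\lambda \neq 0$ terms contribute at most $q^{-2(r-m)} \cdot |L_1| \cdot |L_2| \cdot q^{r/2} = q^{-2(r-m)} q^{2(r-m)} q^{r/2} = q^{r/2}$, which must be cut down to the claimed $q^{m/2}$-type bound. This is where the real work lies: the crude bound $q^{r/2}$ is too weak, so I expect the main obstacle to be showing that for most $\mu$ the inner quadratic sum in $u$ either vanishes or that the number of $(\lambda,\mu) \in L_1 \times L_2$ with $\lambda \neq 0$ giving a nonzero contribution is much smaller than $|L_1||L_2|$. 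The correct route is presumably to complete the square \emph{inside} the inner sum first (for fixed $\lambda \neq 0$, the sum over $u$ of $\psi(\lambda a u^2 + \mu u)$ equals a Gauss sum times $\psi(-\mu^2/(4\lambda a))$), so that after summing over $\mu \in L_2$ one is left with $\sum_{\mu \in L_2} \psi(-\mu^2/(4 \lambda a))$ — again a quadratic character sum, now over the $(r-m)$-dimensional space $L_2$, whose restriction to that subspace is nondegenerate for ``most'' $\lambda$, yielding cancellation of size $q^{(r-m)/2}$ rather than the trivial $q^{r-m}$. Tracking this through gives the bound $q^{-2(r-m)} \cdot q^{r-m} \cdot q^{(r-m)/2} \cdot q^{r/2} = q^{(r-m)/2 + r/2 - (r - m)} = q^{r/2 - (r-m)/2} = q^{m/2}$ for that regime, and the symmetric treatment (applying orthogonality to $u$ rather than $v$, or exploiting $m$ vs.\ $r-m$) produces the companion $q^{m - (r-m)/2} = q^{(3m-r)/2}$-type term; combining, $\bigl(q^{-m/2} + q^{-(r-m)/2}\bigr) q^{m + o(m)}$ emerges, with the $o(m)$ absorbing degenerate subspace configurations where the quadratic form on $L_1$ or $L_2$ happens to be degenerate. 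I would cite~\cite{CillShp} for the precise bookkeeping of these degenerate cases, since controlling them uniformly over all choices of the $m$-dimensional box (which in the identification with $\F_q[X]/F(X)$ is the specific ``low-degree'' subspace) is the technically delicate part.
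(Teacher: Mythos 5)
First, a point of comparison: the paper does not prove this lemma at all --- it is quoted verbatim from~\cite{CillShp} --- so there is no in-paper argument to measure your proposal against, and it must stand as a self-contained proof. Judged that way, it has a genuine gap, located exactly where you write that the $o(m)$ absorbs ``degenerate subspace configurations'' and that you would cite~\cite{CillShp} for the bookkeeping. Your completion set-up is correct, and the accounting $q^{-2(r-m)}\cdot q^{r-m}\cdot q^{(r-m)/2}\cdot q^{r/2}=q^{m/2}$ is the right contribution of those $\lambda\neq 0$ for which the quadratic form $\mu\mapsto \Tr\(-(\lambda b+\mu)^2/(4\lambda a)\)$ restricted to the $(r-m)$-dimensional annihilator $L_2$ is nondegenerate. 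But when that form has a radical of dimension $d_\lambda=\dim\(L_2\cap 2\lambda a\, L_2^{\perp}\)>0$, the inner sum over $\mu$ has modulus up to $q^{((r-m)+d_\lambda)/2}$, and $d_\lambda$ can be as large as $r-m$ for some $\lambda$. These $\lambda$ cannot be swept into $q^{o(m)}$: the shape of the stated bound tells you so. Your nondegenerate computation alone would give $q^{2m-r}+q^{m/2+o(m)}$, which for $m>r/2$ is \emph{smaller} than the lemma's bound by a factor of $q^{(r-m)/2}$; the term $q^{(3m-r)/2}$, which dominates in that range, must therefore be produced precisely by the degenerate spectrum (or by whatever replaces this step in a correct argument). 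Bounding $\sum_{\lambda}q^{d_\lambda/2}$ is a trilinear intersection count over the specific low-degree subspaces, and it is the actual content of the lemma; nothing in your plan addresses it, and deferring it to~\cite{CillShp} is circular here, since that citation \emph{is} the paper's entire proof.

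Two smaller but real problems. The claim that ``the symmetric treatment (applying orthogonality to $u$ rather than $v$)'' produces the companion term $q^{(3m-r)/2}$ is not an argument: in $v=P(u)$ the variable $v$ is a function of $u$, so there is no $u\leftrightarrow v$ symmetry to exploit, and no second computation is exhibited that yields $q^{(3m-r)/2}$. And the assertion that the restricted quadratic form is nondegenerate ``for most $\lambda$'' is unquantified; whether it holds, and with what exceptional set, depends on how the multiplicative translates $2\lambda a\,L_2^{\perp}$ meet $L_2$, which is governed by the arithmetic of the polynomial boxes (e.g.\ via the divisor bound of~\cite[Theorem~1]{CillShp}) and not by a genericity heuristic. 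Minor: the parenthetical main-term computation $q^{-2(r-m)}\cdot q^m\cdot q^r$ has a spurious factor $q^m$ (the $\lambda=\mu=0$ term contributes $q^{2m-r}$, as you then correctly state), and since the paper fixes $q$ odd throughout, the aside about even $q$ is unnecessary.
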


\begin{lem}\label{lem:Hyperb}
 For any positive  integer $ m \leq r$ and any $a \in \F_{q^r}^*$, the number of solutions to the equation 
$$uv  = a , \qquad u, v \sim m, $$
is bounded by 
$\(q^{(3m-r)/2}+1\)q^{o(m)}$.
\end{lem}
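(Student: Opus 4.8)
The plan is to work entirely in the polynomial ring $\F_q[X]$ and to lift the congruence to an honest polynomial identity. Writing $u,v$ for the degree‑$<r$ representatives, the hypothesis $u,v\sim m$ means $\deg u,\deg v\le m-1$, while $uv=a$ in $\F_{q^r}$ translates into $uv=a+wF$ for a \emph{unique} $w\in\F_q[X]$ determined by $u$ and $v$. Hence the number $N$ of solutions equals $\sum_{w}\#\{(u,v):\ \deg u,\deg v<m,\ uv=a+wF\}$, and the strategy is to control separately (i) the number of admissible "lifting parameters" $w$, and (ii) for each fixed $w$, the number of factorisations of $b_w:=a+wF$.

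For step (i): since $\deg(uv)\le 2m-2$ and $\deg a\le r-1$, any $w$ that can occur satisfies $\deg(wF)\le\max\{2m-2,\,r-1\}$, so $\deg w\le\max\{2m-r-2,\,-1\}$. Consequently the admissible $w$ (including $w=0$) number at most $q^{\max\{2m-r,0\}}$, which is at most $q^{(3m-r)/2}+1$: indeed $m\le r$ forces $2m-r\le(3m-r)/2$, and when $m\le r/2$ the only choice is $w=0$.

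For step (ii): because $a\neq 0$ and $\deg a<r=\deg F$, we have $b_w\neq 0$. Moreover, if $(u,v)$ is a solution for a given $w$, then $b_w=uv$ with $\deg u,\deg v<m$, so $\deg b_w<2m$. Thus the number of ordered factorisations $b_w=uv$ is, after peeling off the leading coefficient in $\F_q^{\ast}$ and invoking the (uniform) divisor bound in $\F_q[X]$ — a polynomial of degree $d$ has $q^{o(d)}$ monic divisors — at most $(q-1)$ times $q^{o(\deg b_w)}\le q^{o(m)}$. Combining (i) and (ii) yields $N\le\bigl(q^{(3m-r)/2}+1\bigr)q^{o(m)}$, as claimed (in fact this argument gives the sharper $\bigl(q^{2m-r}+1\bigr)q^{o(m)}$).

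The only genuinely delicate point is making sure the divisor bound is applied at the right scale, namely $q^{o(m)}$ rather than the weaker $q^{o(r)}$; this is exactly what the observation $\deg b_w<2m$ supplies, so the same degree bookkeeping that caps the number of lifts $w$ also keeps every $b_w$ of degree $O(m)$. Everything else is routine: no additive characters or Weil‑type estimates are needed, only degree arithmetic together with the standard divisor bound over $\F_q[X]$, and the fact (used implicitly) that $\F_q[X]$ is a unique factorisation domain so that $uv=b_w$ has no solutions once $\deg b_w\ge 2m$.
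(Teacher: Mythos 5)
Your proof is correct for the lemma as stated, and it takes a genuinely different route from the paper, which does not prove the lemma at all but imports it from Cilleruelo--Shparlinski, where it is obtained as a concentration-of-points bound (a function-field analogue of the Cilleruelo--Garaev argument for the modular hyperbola) that is insensitive to additive translations of the box. Your lifting-plus-divisor-bound argument is more elementary, and your bookkeeping is sound: the count of lifts $w$ is at most $q^{\max\{2m-r,0\}}\le q^{(3m-r)/2}+1$ precisely because $m\le r$, and the observation that any $b_w=a+wF$ admitting a factorisation $uv$ with $\deg u,\deg v<m$ must itself have degree below $2m$ is exactly what lets you apply the divisor bound at scale $q^{o(m)}$ rather than $q^{o(r)}$; this is the same mechanism the paper itself uses later (Lemma~\ref{lem:simpleNbound}, via~\cite[Theorem~1]{CillShp}). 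As you note, this even yields the sharper exponent $2m-r$ in place of $(3m-r)/2$. What your approach gives up is robustness under translation: the identity $\deg(uv)<2m$ is special to the initial box $\{f:\deg f<m\}$, and for $u\in f_0+\{f:\deg f<m\}$ with $\deg f_0$ close to $r$ both the number of admissible lifts $w$ and the degree of $b_w$ blow up, so the divisor argument collapses. This is not merely academic here: in the proof of Theorem~\ref{thm:AddEnergy-inv} the lemma is applied to $(u-a^{-1})(v-a^{-1})=a^{-2}$, where $u-a^{-1}$ ranges over a coset of the degree-$<m$ subspace rather than over the box itself, so the version of the lemma the paper actually needs is the translated one supplied by~\cite{CillShp}; your proof establishes the statement as written but could not be substituted for the citation in that application.
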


\subsection{Proof of Theorem~\ref{thm:AddEnergy-sqrt}}
For any $\lambda \in \F_{q^r}$ we define 
$$Q_{\lambda}(\bbeta) = \sum_{\substack{(u,v) \in \F_{q^r}^2 \\ u-v = \lambda}}\beta_{u^2}\ov{\beta}_{v^2}. $$
We note that together with each  term $\beta_{u^2}\ov{\beta}_{v^2}$ corresponding to $u-v = \lambda$, 
the above sum also contains the term 
$$\beta_{(-v)^2}\ov{\beta}_{(-u)^2} = \ov \beta_{u^2} \beta_{v^2}, 
$$
corresponding to $(-v) - (-u)= \lambda$. Hence $Q_{\lambda}(\bbeta)$ is real. 

Subsequently, we observe that 
\begin{equation} 
 \label{eq:E=Q^2}
E_{q,r}^{\sqrt}(\bbeta)  = \sum_{\lambda \in \F_q}Q^2_{\lambda}(\bbeta) = \sum_{\lambda \in \F_q}|Q_{\lambda}(\bbeta)|^2. 
\end{equation}

Note that by the triangle inequality we have 
\begin{equation} \label{Qlambda_triangleinequality}
\begin{split} 
    \sum_{\lambda \in \F_{q^r}}|Q_{\lambda}(\bbeta)|
    &\leq \sum_{\lambda \in \F_{q^r}} \sum_{\substack{u,v\in \F_{q^r} \\ u-v = \lambda}}|\beta_{u^2}||{\beta_{v^2}}{|}   \\
    &=  \sum_{\substack{u,v \in \F_{q^r}}}|\beta_{u^2}||{\beta_{v^2}}{|}   
     \leq 4\sum_{\substack{x,y \in \F_{q^r}}}|\beta_{x}||{\beta_{y}}{|}   
     \ll \|\bbeta\|_1^2, 
\end{split} 
\end{equation} 
which is  used later. Now, we have  
$$
     E_{q,r}^{\sqrt}(\bbeta)
    = \sum_{\lambda \in \F_{q^r}^*}Q^2_{\lambda}(\bbeta) + Q_0^2(\bbeta)
    =  \sum_{\lambda \in \F_{q^r}^*}|Q_{\lambda}(\bbeta)|^2 + O\(\|\bbeta\|_2^4 \), 
$$
which gives 
\begin{equation} \label{eq:E_{q,r}-sqrt}
    | E_{q,r}^{\sqrt}(\bbeta)|
\leq \max_{\lambda \in \F_{q^r}^*}|Q_{\lambda}(\bbeta)|\sum_{\lambda \in \F_{q^r}^*}|Q_{\lambda}(\bbeta)| + O\(\|\bbeta\|_2^4 \).
\end{equation} 
To now deal with the term $ \max_{\lambda \in \F_{q^r}^*}|Q_{\lambda}(\bbeta)|$ we notice
\begin{equation} \label{maxQlambda}
\begin{split} 
    \max_{\lambda \in \F_{q^r}^*}|Q_{\lambda}(\bbeta)|
    &= \max_{\lambda \in \F_{q^r}^*}\left|\sum_{g_1 \sim m}\sum_{g_2 \sim m}\sum_{\substack{u,v \in \F_{q^r} \\ u-v = \lambda\\ u^2 = g_1, \ v^2 = g_2}}\beta_{u^2}\ov{\beta}_{v^2}\right|   \\
    &\leq \max_{\lambda \in \F_{q^r}^*}\sum_{g_1 \sim m}\sum_{g_2 \sim m}\sum_{\substack{u,v \in \F_{q^r} \\ u-v = \lambda\\ u^2 = g_1, \ v^2 = g_2}}|\beta_{u^2}\ov{\beta}_{v^2}|   \\
    &\leq \max_{g \sim m}|\beta_g|^2\max_{\lambda \in \F_{q^r}^*}\sum_{g_1 \sim m}\sum_{g_2 \sim m}\sum_{\substack{u,v \in \F_{q^r} \\ u-v = \lambda\\ u^2 = g_1, \ v^2 = g_2}}1  \\
    &= |\bbeta\|_\infty^2Q_{\lambda}(\bm{1}_m), 
    \end{split} 
\end{equation} 
 where  we recall that $\bm{1}_m$ denotes  the characteristic function on $g \in \F_{q^r}$ for $g \sim m$.
 
We next show 
$$
    Q_{\lambda}(\bm{1}_m) \leq 4\#\{(Z,V) \in \F_{q^r}^2:~(Z-\lambda^2)^2 = 4\lambda^2V, \: Z \sim m, \: V \sim m\}.
$$
To see this, we firstly have
$$
    Q_{\lambda}(\bm{1}_m) = \#\{(u,v) \in \F_{q^r}^2 :~ u-v = \lambda, u^2 \sim m, \: v^2 \sim m\}.
$$
If we set $U = u^2$ and $V = v^2$ then using $u-v = \lambda$ we see 
$$U-V = u^2 - v^2 = (u-v)(u+v) = \lambda(2v + \lambda). $$
Rearranging and squaring, we obtain
$$(U-V-\lambda^2)^2 = 4\lambda^2V $$
and letting $Z = U-V$ we have 
$$(Z-\lambda^2)^2 = 4\lambda^2V. $$
Given any $(Z,V)$ satisfying the above equation, this corresponds to at most $4$ pairs $(u,v)$. Thus we can say 
$$Q_{\lambda}(\bm{1}_m) \leq 4\#\{(Z,V) \in \F_{q^r}^2 :~ (Z-\lambda^2)^2 = 4\lambda^2V, \: Z \sim m, \: V \sim m\}$$ 
as desired. Now, using Lemma~\ref{lem:f(x)=y} we obtain 
\begin{align*}
Q_{\lambda}(\bm{1}_m) &   \leq q^{m+ o(m)}(q^{-m/2}  + q^{-(r-m)/2})\\
& =  \(q^{m/2}  + q^{3m/2-r/2}\)q^{o(m)}.
\end{align*}

Substituting this into~\eqref{maxQlambda} we obtain
$$
    \max_{\lambda \in \F_{q^r}^*}|Q_{\lambda}(\bbeta)|
     \ll |\bbeta\|_\infty^2 \(q^{m/2}  + q^{3m/2-r/2}\)q^{o(m)}. 
$$
We can in turn substitute this into~\eqref{eq:E_{q,r}-sqrt}, and also use~\eqref{Qlambda_triangleinequality}, to derive
$$
E_{q,r}^{\sqrt}(\bbeta)\ll 
    |\bbeta\|_1^2|\bbeta\|_\infty^2 \(q^{m/2}  + q^{3m/2-r/2}\)q^{o(m)} + |\bbeta\|_2^4 
$$ 
and since 
$$|\bbeta\|_2 \leq |\bbeta\|_1^{1/2}|\bbeta\|_\infty^{1/2} $$
we arrive at 
$$
 E_{q,r}^{\sqrt}(\bbeta) \le
    |\bbeta\|_1^2|\bbeta\|_\infty^2 \(q^{m/2}  + q^{3m/2-r/2}\)q^{o(m)}, 
    $$
which concludes the proof.  

\subsection{Proof of Theorem~\ref{thm:AddEnergy-inv}}
We denote by $I_{F}(a, m)$ the number of solutions to the equation 
$$u^{-1} + v ^{-1} = a, \qquad u, v \sim m.
$$
Rearranging we have
$$(u-a^{-1})(v-a^{-1}) = a^{-2}. $$
Now, applying  Lemma~\ref{lem:Hyperb}, for $a\ne 0$, we  derive
$$I_{F}(a, m)\leq \(q^{(3m-r)/2}+1\)q^{o(m)}.$$

We also have the trivial bound $I_{F}(0, m) \le q^m$. 
Thus we can write 
\begin{align*}
E_{q,r}^{\inv}(m) 
    &= \sum_{a \sim r}I_{F}(a, m)^2 \\
    &\leq q^{2m} + \sum_{a \sim r, \, a \neq 0}I_{F}(a, m)^2 \\
    &\leq  q^{2m} + \(q^{(3m-r)/2}+1\) q^{o(m)}\sum_{a \sim r}I_{F}(a, m) \\
    &\leq q^{2m + o(m)} \(q^{(3m-r)/2}+1\), 
\end{align*}
which gives the desired result.

\section{Bounds of Bilinear Sums}
\subsection{Preparations} Before proving Theorem~\ref{thm:BilinearBound-inv}, we need the following result, which is analogous to~\cite[Chapter~6, Exercise~14]{V1955}. 

\begin{lem}\label{lem:Vinogradovbound}
Let $\psi$ be a nontrivial additive character of $F_{q^r}$. For any complex weights
as in~\eqref{Weights} (with $m=n=r$) we have  
$$\left|\sum_{f \sim r}\sum_{g \sim r}\alpha_f\beta_g\psi(fg)\right| \leq  q^{r/2}\|\balpha\|_2\|\bbeta\|_2.$$

\end{lem}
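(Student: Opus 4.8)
\textbf{Proof proposal for Lemma~\ref{lem:Vinogradovbound}.}

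The plan is to expand the square of the sum and exploit the orthogonality of additive characters of $\F_{q^r}$. First I would apply the Cauchy--Schwarz inequality in the $f$-variable, pulling out $\|\balpha\|_2$:
$$
\left|\sum_{f \sim r}\sum_{g \sim r}\alpha_f\beta_g\psi(fg)\right|^2
\leq \|\balpha\|_2^2 \sum_{f \sim r}\left|\sum_{g \sim r}\beta_g\psi(fg)\right|^2.
$$
Here I use that $f \sim r$ ranges over all of $\F_{q^r}$, so the outer sum on the right is a genuine sum over the whole field. Expanding the inner square gives
$$
\sum_{f \sim r}\sum_{g_1 \sim r}\sum_{g_2 \sim r}\beta_{g_1}\ov{\beta}_{g_2}\psi\big(f(g_1-g_2)\big).
$$

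Next I would swap the order of summation to perform the sum over $f \in \F_{q^r}$ first. By the orthogonality relation for the nontrivial additive character $\psi$, namely $\sum_{f \in \F_{q^r}}\psi(fc) = q^r$ if $c = 0$ and $0$ otherwise, only the diagonal terms $g_1 = g_2$ survive, contributing $q^r \sum_{g \sim r}|\beta_g|^2 = q^r\|\bbeta\|_2^2$. Combining with the Cauchy--Schwarz step yields
$$
\left|\sum_{f \sim r}\sum_{g \sim r}\alpha_f\beta_g\psi(fg)\right|^2 \leq q^r \|\balpha\|_2^2\|\bbeta\|_2^2,
$$
and taking square roots gives the claimed bound.

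I do not expect any genuine obstacle here: the only point requiring care is the identification $\{f : f \sim r\} = \F_{q^r}$, which is exactly the setup fixed in Section~\ref{sec:setup}, together with the fact that $fg$ (interpreted via the isomorphism $\F_q[X]/F(X) \cong \F_{q^r}$) is the field multiplication, so that $f \mapsto \psi(fc)$ is an additive character of $\F_{q^r}$ that is trivial precisely when $c = 0$. Everything else is the standard large-sieve-type manipulation, so the proof is short.
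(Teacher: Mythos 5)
Your proof is correct and follows essentially the same route as the paper: Cauchy--Schwarz in the $f$-variable, expansion of the square, and orthogonality of the nontrivial additive character to isolate the diagonal $g_1 = g_2$, giving the factor $q^r\|\bbeta\|_2^2$. No gaps.
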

\begin{proof}
Let
$$
S = \sum_{f \sim r}\sum_{g \sim r}\alpha_f\beta_g\psi(fg).
$$ 
Applying the Cauchy--Schwarz inequality and changing the order of summation we have
\begin{align*}
    |S|^2
    &\leq  \sum_{f \sim r}|\alpha_f|^2 \sum_{f \sim r}\left|\sum_{g \sim r}\beta_g\psi(fg) \right|^2 \\
    &= \|\balpha\|_2^2\sum_{g_1, g_2 \sim r}\beta_{g_1}\ov{\beta}_{g_2}\sum_{f \sim r}\psi(f(g_1-g_2)).
\end{align*}
Now for a given pair $(g_1, g_2)$, the inner sum vanishes unless $g_1 = g_2$ in which case it is equal to $q^r$. So we have 
$$
    |S|^2 \leq \|\balpha\|_2^2\sum_{g \sim r}|\beta_g|^2q^r = q^r\|\balpha\|_2^2\|\bbeta\|_2^2,
$$
as desired. 
\end{proof}

\subsection{Proof of Theorem~\ref{thm:BilinearBound-sqrt}}
Recall
$$W_{q,r}^{\sqrt}(\balpha, \bbeta; m,n) = \sum_{f \sim m}\sum_{g \sim n}\alpha_f \beta_g \sum_{\substack{h\in \F_{q^r}\\h^2=fg}}\psi(h). $$
In the following expansion we first apply the Cauchy--Schwarz inequality, then expand the squared term and then rearrange the order of summation, which yields
\begin{align*}
    |W_{q,r}^{\sqrt}(\balpha, \bbeta; m,n)|^2
    &\leq \sum_{f \sim m}|\alpha_f|^2  \sum_{f \sim m}\left|\sum_{g \sim n}\sum_{\substack{h\\h^2=fg}}\beta_g\psi(h) \right|^2 \\
    &= \|\balpha\|_2^2\sum_{\substack{g_1 \sim n\\g_2 \sim n}}{\beta_{g_1}}\ov{\beta}_{g_2}\sum_{f \sim m}\sum_{\substack{u,v\\u^2 = fg_1\\v^2 = fg_2}}\psi(u-v). 
\end{align*}
Now we   write
\begin{equation} \label{R1+R-1}
    |W_{q,r}^{\sqrt}(\balpha, \bbeta; m,n)|^2= \|\balpha\|_2^2\(R_1 + R_{-1}\), 
\end{equation} 
where 
$$
    R_j 
    =
    \sum_{\substack{g_1 \sim n\\g_2 \sim n \\ \chi(g_1) = \chi(g_2) = j}}{\beta_{g_1}}\ov{\beta}_{g_2}\sum_{\substack{f \sim m\\\chi(f) = j}}\sum_{\substack{u,v\\u^2 = fg_1\\v^2 = fg_2}}\psi(u-v)
$$
and $\chi$ is the quadratic character of $\F_{q^r}$ (note that since $q$ is odd, such a quadratic character exists).

It suffices to only consider $R_1$ since $R_{-1}$ can be worked through identically (see~\cite{SSZ1}). Now to simplify $R_1$ we can write 
$$
    R_1 
=   \frac{1}{2}\sum_{\substack{f \sim m}}\sum_{\substack{t \\t^2 = f}}\sum_{\substack{g_1 \sim n\\g_2 \sim n }}\sum_{\substack{u,v\\u^2 = g_1\\v^2 = g_2}}{\beta_{u^2}}\ov{\beta}_{v^2}\psi(ut-vt), 
$$
and now collecting the terms with the same value of $u-v$ we have 
$$
    R_1 
      =   \frac{1}{2}\sum_{\substack{f \sim m}}\sum_{\substack{t\in \F_{q^r} \\t^2 = f}}\sum_{\lambda \in \F_{q^r}}\sum_{\substack{u,v \in \F_{q^r}\\u-v = \lambda}}{\beta_{u^2}}\ov{\beta}_{v^2}\psi(t\lambda).
$$
In our sum, we are setting $\beta_x = 0 $ if $x\not\sim n$. 
We can now write $R_1$ as 
\begin{equation} \label{R1_simplified}
    R_1 =   \frac{1}{2}\sum_{\lambda \in \F_q^r}A_{\lambda}Q_{\lambda}(\bbeta), 
\end{equation} 
where 
$$A_{\lambda} = \sum_{f \sim m}\sum_{\substack{t \in \F_{q^r}\\ t^2 = f}}\psi(t\lambda ).$$
We next show 
\begin{equation} \label{Alambda_bound}
    \sum_{\lambda \in \F_{q^r}}|A_\lambda|^4 = q^r E_{q,r}^{\sqrt}\(\bm{1}_m\), 
\end{equation} 
where, as before,  $\bm{1}_m$ denotes  the characteristic function on $f \in \F_{q^r}$ with $f \sim m$. Expanding out the left we have 
\begin{align*}
    \sum_{\lambda \in \F_{q^r}}|A_\lambda|^4
    &= \sum_{\lambda \in \F_{q^r}}\left|\sum_{f \sim m}\sum_{\substack{t \in \F_{q^r}\\ t^2 = f}}\psi(t\lambda ) \right|^4\\
    &= \sum_{\substack{f_1, f_2, \\f_3, f_4 \sim m}}\, \sum_{\substack{t_1, t_2, t_3, t_4 \\ t_1^2 = f_1 \ t_2^2 = f_2 \\ t_3^2 = f_3 \ t_4^2 = f_4}}\sum_{\lambda \in \F_{q^r}}\psi(\lambda(t_1 + t_2 - t_3 - t_4)).
\end{align*}
By orthogonality the inner most sum vanishes unless $t_1 + t_2 = t_3 + t_4 $, in which case it is equal to $q^r$. Thus we have 
\begin{align*}
    \sum_{\lambda \in \F_{q^r}}|A_\lambda|^4
    &= q^r E_{q,r}^{\sqrt}\(\bm{1}_m\).
\end{align*}
Now we can trivially write 
$$
    |Q_{\lambda}(\bbeta)| = \(|Q_{\lambda}(\bbeta)|^2\)^{1/4}|Q_{\lambda}(\bbeta)|^{1/2}
$$
so from~\eqref{Qlambda_triangleinequality},  \eqref{R1_simplified},  \eqref{Alambda_bound} and the H\"{o}lder  inequality we have
\begin{equation} \label{eq:R4E}
\begin{split} 
    |R_1|^4 
  &\le \(\sum_{\lambda \in \F_q^r}|A_{\lambda}||Q_{\lambda}(\bbeta)|^{1/2}\(|Q_{\lambda}(\bbeta)|^2\)^{1/4}\)^4\\
     &\le \sum_{\lambda \in \F_{q^r}}|A_\lambda|^4\sum_{\lambda \in \F_{q^r}}|Q_\lambda(\bbeta)|^2\(\sum_{\lambda \in \F_q^r}|Q_{\lambda}(\bbeta)|\)^2\\
     &\le q^r|\bbeta\|_1^4 E_{q,r}^{\sqrt}\(\bm{1}_m\) E_{q,r}^{\sqrt}(\bbeta).
\end{split} 
\end{equation}

Now, using Theorem~\ref{thm:AddEnergy-sqrt} we obtain 
$$ |E_{q,r}^{\sqrt}(\bbeta)| \le |\bbeta\|_1^2|\bbeta\|_\infty^2\(1 + q^{n-r/2}\)  q^{n/2 + o(n)} 
$$
and 
\begin{align*}
     E_{q,r}^{\sqrt}\(\bm{1}_m\) 
    &\le  ||\bm{1}_m||_1^2||\bm{1}_m||_\infty^2 \(1 + q^{m-r/2}\)  q^{m/2 + o(m)}  \\
    &\le q^{5m/2+ o(m)}\(1 + q^{m-r/2}\), 
\end{align*}
so together with~\eqref{eq:R4E} this gives
$$
   |R_1|^4
 \le |\bbeta\|_1^6|\bbeta\|_\infty^2q^{r + 5m/2 + n/2 + o(r))}\(1 + q^{m-r/2}\)\(1 + q^{n-r/2}\).
$$
Finally using~\eqref{R1+R-1} we conclude 
\begin{align*}
    |W_{q,r}^{\sqrt}(\balpha, \bbeta; m,n)|
   \leq \|\balpha\|_2|\bbeta\|_1^{3/4}|\bbeta\|_\infty^{1/4}&q^{r/8 + 5m/16 + n/16 + o(r)}\\
    & \(q^{m/8-r/16} + 1\)\(q^{n/8-r/16} + 1\),
\end{align*}
and the result follows.  

\subsection{Proof of Theorem~\ref{thm:BilinearBound-inv}}
Recall 
$$W_{q,r}^{\inv}(\balpha, \bbeta; m,n) = \sum_{f \sim m}\sum_{g \sim n}\alpha_f \beta_g 
\psi\(f^{-1}g^{-1}\). $$
Applying the Cauchy--Schwarz inequality to the sum over $f$ and rearranging, we obtain \begin{align*}
    &|W_{q,r}^{\inv}(\balpha, \bbeta; m,n)|^2\\
    &\hspace{3em}\leq \|\balpha\|^2_\infty\|\bbeta\|^2_\infty q^n\sum_{f\sim n}\left|\sum_{g \sim m}\psi(af^{-1}g^{-1}) \right|^2\\
    &\hspace{3em}\leq \|\balpha\|^2_\infty\|\bbeta\|^2_\infty q^n\sum_{g_1 \sim n}\sum_{g_2 \sim n}\left|\sum_{f \sim m}\psi(f^{-1}(g_1^{-1} - g_2^{-1}))\right|.
\end{align*}
Now  applying the Cauchy-Schwarz inequality to the sums over $g_1, g_2$ we derive
\begin{align*}
    &|W_{q,r}^{\inv}(\balpha, \bbeta; m,n)|^4\\
    &\hspace{1em}\leq 
     \|\balpha\|^4_\infty\|\bbeta\|^4_\infty q^{2n + 2m}
     \sum_{f_1, f_2 \sim m} \, \sum_{g_1,g_2 \sim n} 
     \psi((f_1^{-1} - f_2^{-1})(g_1^{-1}-g_2^{-1}))\\
    &\hspace{1em} =  \|\balpha\|^4_\infty\|\bbeta\|^4_\infty q^{2n+2m}\sum_{u \sim r}\sum_{v \sim r}I_{F}(u, m)I_{F}(v, n)\psi(u v),
\end{align*} 
where $I_{F}(u, m)$ and $I_{F}(v, n)$ are as defined in the proof of Theorem~\ref{thm:AddEnergy-inv}. 

Now applying Lemma~\ref{lem:Vinogradovbound} we have 
\begin{align*}
    &|W_{q,r}^{\inv}(\balpha, \bbeta; m,n)|^8\\
    &\hspace{3em}\leq 
    \|\balpha\|^8_\infty\|\bbeta\|^8_\infty q^{4n+4m}q^r
    \(\sum_{u \sim r}I_{F}(u, m)^2  \) \(\sum_{v \sim r}I_{F}(v, n)^2 \).
\end{align*}
Finally applying Theorem~\ref{thm:AddEnergy-inv} to these sums we obtain
\begin{align*}
    &|W_{q,r}^{\inv}(\balpha, \bbeta; m,n)|^8\\
    &\qquad \qquad \leq \|\balpha\|^8_\infty\|\bbeta\|^8_\infty q^{r+6n + 6m + o(r)}
    \(q^{(3n-r)/2}+1\)\(q^{(3m-r)/2}+1\), 
\end{align*}
and the result follows.

\section{Results of polynomials in residue classes}

\subsection{Preparations} For convenience, for any positive integer $n < r$ we introduce the quantity
$$
    B(r,n) =
    \begin{cases}
    {3n}/{2} + {r}/{8}, &n <{r}/{3}, \\
    {15n}/{8}, &{r}/{3} \leq n < r.
    \end{cases}
$$
We also denote by $\mathcal{P}_n$ the set of all monic, irreducible polynomials of degree exactly $n$ in $\F_q[X]$. 
 Note that we can naturally identify $\cP_n$ with a subset of $\F_{q^r}$, via the discussion in Section \ref{sec:setup}.

The following is a direct corollary of Theorem~\ref{thm:BilinearBound-inv}.
\begin{lem}\label{lem:sumreciprocalirreducibles}
For any positive integer $n< r$ and any nontrivial additive character $\psi$ of $\F_{q^r}$ we have 
$$
    \left|\sum_{\ell_1,\ell_2 \in \mathcal{P}_{n}}\psi(\ell_1^{-1}\ell_2^{-1}) \right| \leq q^{B(r,n) + o(r)}.
$$
\end{lem}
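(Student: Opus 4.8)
The plan is to derive Lemma~\ref{lem:sumreciprocalirreducibles} as a direct consequence of Theorem~\ref{thm:BilinearBound-inv} by taking the weights to be supported on irreducible polynomials. Specifically, one would apply Theorem~\ref{thm:BilinearBound-inv} with $m = n$ and with $\balpha = \bbeta = \bm{1}_{\cP_n}$, the characteristic function of $\cP_n$ viewed as a subset of $\F_{q^r}$ (this makes sense for $n < r$, as noted in the paragraph introducing $\cP_n$). Since each $\ell \in \cP_n$ has degree exactly $n < r$, we indeed have $\ell \sim r$, in fact $\ell \sim n$, so these weights are legitimate weights as in~\eqref{Weights}. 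We would then need to check that $af^{-1}g^{-1}$ in the statement reduces to $f^{-1}g^{-1}$ for our purposes — here one simply takes $a = 1$ (or absorbs $a$ into a character twist if the cited theorem carries a fixed $a$; in the form stated here $W_{q,r}^{\inv}$ already has no extra $a$, so there is nothing to do).

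The key steps, in order, are as follows. First, observe that $\|\bm{1}_{\cP_n}\|_\infty = 1$, so the weight factors in Theorem~\ref{thm:BilinearBound-inv} are trivial. Second, plug $m = n$ into the bound of Theorem~\ref{thm:BilinearBound-inv}, giving
$$
\left|\sum_{\ell_1,\ell_2 \in \cP_n}\psi(\ell_1^{-1}\ell_2^{-1})\right| \leq q^{r/8 + 3n/2 + o(r)}\(q^{3n/16 - r/16} + 1\)^2.
$$
Third, split into cases according to the sign of $3n/16 - r/16$, i.e. according to whether $n < r/3$ or $n \geq r/3$. When $n < r/3$ the term $q^{3n/16 - r/16}$ is at most $1$, so $(q^{3n/16-r/16}+1)^2 = q^{o(r)}$ and the bound becomes $q^{r/8 + 3n/2 + o(r)} = q^{B(r,n) + o(r)}$. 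When $r/3 \leq n < r$ the dominant term in the square is $q^{3n/16 - r/16}$, so $(q^{3n/16-r/16}+1)^2 = q^{3n/8 - r/8 + o(r)}$, and multiplying by $q^{r/8 + 3n/2}$ gives $q^{15n/8 + o(r)} = q^{B(r,n) + o(r)}$. Fourth, observe that the two cases exactly match the definition of $B(r,n)$, completing the proof.

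The main obstacle — such as it is — is purely bookkeeping: one must be careful that the ranges in the definition of $B(r,n)$ align precisely with the sign change at $n = r/3$ coming from the factor $q^{3n/16 - r/16}$, and that the arithmetic $r/8 + 3n/2 = B(r,n)$ for $n < r/3$ and $r/8 + 3n/2 + (3n/8 - r/8) = 15n/8$ for $n \geq r/3$ both check out; both are immediate. A minor additional point worth a sentence is to note that $\cP_n \subseteq \{f : f \sim n\}$ so that invertibility modulo $F$ is automatic (every nonzero element of the field $\F_{q^r}$ is invertible), hence the sum is well-defined. There is genuinely no analytic difficulty here; the lemma is just the specialization of Theorem~\ref{thm:BilinearBound-inv} to indicator weights on the primes, repackaged with the piecewise exponent $B(r,n)$.
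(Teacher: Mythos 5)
Your proposal is correct and is exactly what the paper intends: the paper offers no written proof, stating only that the lemma is ``a direct corollary of Theorem~\ref{thm:BilinearBound-inv}'', and your specialization to indicator weights with $m=n$ together with the case split at $n=r/3$ matching $B(r,n)$ is the intended argument. One trivial quibble: a polynomial of degree exactly $n$ satisfies $\ell \sim n+1$ rather than $\ell \sim n$ under the paper's convention ($\deg f < m$), but this off-by-one is absorbed by the $q^{o(r)}$ factor and does not affect anything.
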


Now we introduce a number of results regarding bounds on the number of solutions to certain equations over $\F_{q^r}$. For any positive integers $n < r$, $h \leq r$ and any $a \in \F_{q^r}^*$ we denote by $N_F(a,n,h)$ the number of solutions to 
\begin{equation} \label{eq:Ncongruence}
    \ell_1\ell_2u = a, \quad \ell_1,\ell_2 \in \cP_{n}, \: u \sim h. 
\end{equation}

The next two results give two different bounds for $N_F(a,n,h)$.

Let
$$
\varpi_{n} = \#\cP_{n}.
$$
In particular, we have
\begin{equation} \label{eq:CountIrred}
\varpi_{n}=\frac{1}{n} \sum_{d \mid n} \mu(d) q^{n/d} = \frac{1}{n}\(q^n + O\(q^{n/2}\)\), 
\end{equation} 
where $\mu$ is the classical M{\"o}bius function, see~\cite[Theorem~3.25]{LN}. 

\begin{lem}\label{lem:Nbound}
For any positive integers $n < r$, $h \leq r$ and any $a \in \F_{q^r}^*$ we have 
$$N_{F}(a, n, h)= \varpi_{n}^2q^{h-r} + O\(q^{B(r,n) + o(r)}\).$$

\end{lem}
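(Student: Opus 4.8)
The plan is to estimate $N_F(a,n,h)$ by detecting the congruence $\ell_1\ell_2 u = a$ with additive characters of $\F_{q^r}$. First I would write
\[
N_F(a,n,h) = \frac{1}{q^r}\sum_{\psi}\ \sum_{\ell_1,\ell_2 \in \cP_n}\ \sum_{u \sim h}\psi\bigl(\ell_1\ell_2 u - a\bigr),
\]
where $\psi$ runs over all additive characters of $\F_{q^r}$. Actually, since the equation involves $\ell_1\ell_2 u = a$ rather than reciprocals, I would first substitute: for $\ell_1,\ell_2 \in \cP_n$ (hence nonzero) and for the purposes of applying Lemma~\ref{lem:sumreciprocalirreducibles} it is cleaner to rewrite the condition as $u = a\,\ell_1^{-1}\ell_2^{-1}$ and sum over the characters detecting $u \sim h$. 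That is, I would use the fact that the indicator of $u \sim h$ can be written as a sum over characters trivial on the subspace $\{f:~ f \sim h\}$, of which there are $q^{r-h}$; more precisely, the number of $u \sim h$ with $u = b$ equals $q^{h-r}\sum_{\psi \in \cH}\psi(b)$ where $\cH$ is the group of characters trivial on that subspace, $|\cH| = q^{r-h}$.

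Next I would separate the principal character $\psi_0 \in \cH$, which contributes the main term
\[
q^{h-r}\sum_{\ell_1,\ell_2 \in \cP_n} 1 = \varpi_n^2\, q^{h-r},
\]
matching the claimed main term. The remaining contribution is
\[
q^{h-r}\sum_{\psi \in \cH,\ \psi \ne \psi_0}\ \sum_{\ell_1,\ell_2 \in \cP_n}\psi\bigl(a\,\ell_1^{-1}\ell_2^{-1}\bigr).
\]
For each fixed nontrivial $\psi \in \cH$, the map $x \mapsto \psi(ax)$ is again a nontrivial additive character, so Lemma~\ref{lem:sumreciprocalirreducibles} applies and bounds the inner double sum by $q^{B(r,n)+o(r)}$. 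Summing over the at most $q^{r-h}$ characters in $\cH$ and multiplying by $q^{h-r}$ gives an error term of size $q^{h-r}\cdot q^{r-h}\cdot q^{B(r,n)+o(r)} = q^{B(r,n)+o(r)}$, which is exactly the claimed error. This is the whole argument.

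The main obstacle — really the only point requiring care — is getting the character detection of the condition $u \sim h$ exactly right: one must use that $\{f \in \F_{q^r}:~f \sim h\}$ is an $\F_q$-subspace of dimension $h$ (via the identification with $\{u_0 + u_1\rho + \dots + u_{h-1}\rho^{h-1}\}$ from Section~\ref{sec:setup}), so its annihilator in the character group has size $q^{r-h}$, and the indicator function is $q^{h-r}$ times the sum over that annihilator. Once this is set up, everything reduces cleanly to Lemma~\ref{lem:sumreciprocalirreducibles}, and the savings from the nontrivial characters exactly cancels the $q^{r-h}$ count, which is why the error term ends up independent of $h$. One should also note that the trivial character count $\varpi_n^2 q^{h-r}$ needs no further simplification, since the statement of Lemma~\ref{lem:Nbound} keeps $\varpi_n$ unexpanded; if desired one could insert~\eqref{eq:CountIrred} but that is not needed here.
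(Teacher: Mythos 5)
Your argument is correct and is essentially the paper's own proof: the paper realizes the annihilator of the subspace $\{f:~f\sim h\}$ concretely via a dual basis (the characters $u\mapsto\psi(bu)$ with $b$ ranging over a $q^{r-h}$-element set $\mathcal H$), separates $b=0$ for the main term $\varpi_n^2 q^{h-r}$, and applies Lemma~\ref{lem:sumreciprocalirreducibles} to each nonzero $b$ exactly as you do. The only cosmetic difference is that you phrase the detection abstractly through the character-group annihilator rather than through the explicit dual basis.
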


\begin{proof}
Recalling our discussion in Section \ref{sec:setup}, if $\rho$ is a root of $F(X)$ we identify $\F_{q^r}$ as the $\F_q$-span of $\{1, \rho, ..., \rho^{r-1}\}$. Now, let ${\omega} = \{\omega_0, ..., \omega_{r-1}\}$ be the basis dual to $\{1, \rho, ..., \rho^{r-1}\}$ in $\F_{q^r}$. That is, ${\omega}$ satisfies $$\text{Tr}_{\F_{q^r}/\F_q}(\rho^i\omega_j) = \delta_{i,j}. $$
Now we have the following orthogonality relation;
$$
    \prod_{j=h}^{r-1}\sum_{b \in \F_q}\eta(b\text{Tr}_{\F_{q^r}/\F_q}(u\omega_j))
     = 
    \begin{cases}
    q^{r-h}, &u \sim h, \\
    0, &\text{otherwise,}
    \end{cases}
$$
where $\eta$ is an additive character of $\F_q$. In the case of $u \sim h$, rearranging we obtain 
\begin{align*}
    q^{r-h}
    &= \sum_{(b_{h}, ..., b_{r-1}) \in \F_{q^r}^{r-h}}\eta\(\text{Tr}_{\F_{q^r}/\F_q}\sum_{j=h}^{r-1}b_j(u\omega_j) \)\\
    &= \sum_{b \in \mathcal{H}}\psi(bu),
\end{align*}
where $\psi$ is a lift of $\eta$ to $\F_{q^r}$  and 
$$
    \mathcal{H} =\left\{ \sum_{j=h}^{r-1}b_j\omega_j :~ b_j \in \F_q \right\}.
$$
Thus, we can write 
$$
    N_{F}(a, n, h)= 
    \sum_{f,g \in \mathcal{P}_{n}}\frac{1}{q^{r-h}}\sum_{b \in \mathcal{H}}\psi(baf^{-1}g^{-1}).
$$
We now rearrange, separate the contribution from $b=0$ and apply Lemma~\ref{lem:sumreciprocalirreducibles} to get
\begin{align*}
     N_{F}(a,n,h)
    &= 
    \frac{1}{q^{r-h}}\sum_{b \in \mathcal{H}}\sum_{f,g \in \mathcal{P}_{L}}\psi(baf^{-1}g^{-1})\\
    &= \varpi_{n-1}^2q^{-r+h} +  \frac{1}{q^{r-h}}\sum_{b \in \mathcal{H}\setminus \{0\}}\sum_{f,g \in \mathcal{P}_{n}}\psi(baf^{-1}g^{-1})\\
    &= \varpi_{n}^2q^{-r+h} + O\(q^{B(r,n) + o(r)}\),
\end{align*}
which concludes the proof.
\end{proof}

\begin{lem}\label{lem:simpleNbound}
For any positive integers $n < r$, $h \leq r$ and any $a \in \F_{q^r}^*$ we have 
$$N_{F}(a, n, h) \leq q^{o(r)}\(1 + q^{2n + h -r}\).$$
\end{lem}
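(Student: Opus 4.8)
The plan is to count solutions to $\ell_1 \ell_2 u = a$ with $\ell_1, \ell_2 \in \cP_n$ and $u \sim h$ by first dropping the irreducibility constraint on the $\ell_i$ and then controlling the fibre. First I would write $w = \ell_1 \ell_2$ and observe that every solution of \eqref{eq:Ncongruence} gives a solution of $wu = a$ with $u \sim h$ and $w$ ranging over products of two degree-$n$ irreducibles; crucially, since $w$ has degree exactly $2n$, we may view $w$ as an element of $\F_{q^r}$ only when $2n \le r$, so I would treat this regime first and then handle $2n > r$ by the trivial bound (in that case $q^{2n+h-r} \ge q^h$ and the claimed bound exceeds $q^h \ge N_F(a,n,h)$ up to $q^{o(r)}$). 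In the main regime, for each fixed $u$ with $u \sim h$ the value $w = a u^{-1}$ is determined, so $N_F(a,n,h)$ is at most $\sum_{u \sim h} \#\{(\ell_1,\ell_2) \in \cP_n^2 : \ell_1 \ell_2 = a u^{-1}\}$.

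The next step is to bound, for a fixed target $b \in \F_{q^r}^*$, the number of pairs $(\ell_1,\ell_2) \in \cP_n^2$ with $\ell_1 \ell_2 = b$. This is exactly the setting of Lemma~\ref{lem:Hyperb} applied with $m$ replaced by $n$: the number of solutions to $\ell_1 \ell_2 = b$ with $\ell_1, \ell_2 \sim n$ (which contains the pairs of irreducibles of degree exactly $n$) is at most $(q^{(3n-r)/2}+1)q^{o(n)}$. Wait — I should be careful: Lemma~\ref{lem:Hyperb} is stated for $u,v \sim m$, meaning degree $< m$, whereas elements of $\cP_n$ have degree exactly $n$, hence lie in the set $u \sim n{+}1$. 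So I would instead invoke Lemma~\ref{lem:Hyperb} with parameter $n+1$, giving a fibre bound $(q^{(3n-r)/2}+1)q^{o(n)} = q^{o(r)}(1 + q^{(3n-r)/2})$. Summing over the $q^h$ choices of $u$ then yields
$$
N_F(a,n,h) \le q^{h} \cdot q^{o(r)}\bigl(1 + q^{(3n-r)/2}\bigr).
$$

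Finally I would compare this with the claimed bound $q^{o(r)}(1 + q^{2n+h-r})$. The first term $q^h$ is dominated by $q^{o(r)}(1+q^{2n+h-r})$ only if... hmm, $q^h \le q^{2n+h-r}$ requires $2n \ge r$, which fails in our main regime — so a naive sum over all $u$ is too lossy and the approach as stated does not close. The fix, and the step I expect to be the real obstacle, is to not fix $u$ but rather fix the product $w = \ell_1\ell_2$: the number of $w$ that arise is at most $\varpi_n^2 \le q^{2n}$, and for each such $w$ there are at most $q^{o(r)}(1 + q^{(3n-r)/2})$ factorizations into two elements of $\cP_n$ and at most $q^{o(r)}(1 + q^{(3\cdot\text{(something)}-r)/2})$... no — for each fixed $w$ the equation $wu = a$ with $u \sim h$ is a hyperbola-type equation in one unknown $u$, but $w$ is already fixed so $u = a w^{-1}$ is determined and contributes at most $1$. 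Hence $N_F(a,n,h) \le \#\{w : w = \ell_1\ell_2, \ \ell_i \in \cP_n, \ aw^{-1} \sim h\} \le \min\{\varpi_n^2,\ \#\{w \sim h' : \ldots\}\}$. Bounding the number of admissible $w$ by $\varpi_n^2 \le q^{2n}$ on one hand, and by the number of $w$ with $aw^{-1} \sim h$ (equivalently $w \in a \cdot \{u^{-1} : u \sim h\}$, a set of size $q^h$) intersected with the degree-$2n$ products — at most $q^h$ — on the other, and multiplying by the per-$w$ factorization count $q^{o(r)}(1+q^{(3n-r)/2})$, gives $N_F(a,n,h) \le q^{o(r)}(1+q^{(3n-r)/2})\min\{q^{2n}, q^h\}$. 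Since $(3n-r)/2 + \min\{2n,h\} \le \max\{2n+h-r, \ldots\}$... I would then verify the elementary inequality $q^{o(r)}(1+q^{(3n-r)/2})\min\{q^{2n},q^h\} \le q^{o(r)}(1 + q^{2n+h-r})$ by checking the four cases according to which term in each min/max dominates; this case analysis is routine but is where the bookkeeping must be done carefully, and it is the only genuinely delicate point in the argument.
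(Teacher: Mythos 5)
There is a genuine gap here: neither of your two attempts closes, and the ``elementary inequality'' you defer to at the end is false. You correctly reject the first attempt (fix $u$ and sum over the $q^h$ choices). But the proposed fix --- fixing the product $w=\ell_1\ell_2$, bounding the number of admissible $w$ by $\min\{q^{2n},q^{h}\}$, and multiplying by the per-$w$ factorization count $q^{o(r)}(1+q^{(3n-r)/2})$ from Lemma~\ref{lem:Hyperb} --- is also too lossy. Take $n=h=r/4$: your bound becomes $q^{r/4}\cdot q^{o(r)}(1+q^{-r/8})=q^{r/4+o(r)}$, whereas the lemma asserts $q^{o(r)}(1+q^{-r/4})=q^{o(r)}$. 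The lemma is correct in this range: since $\deg(\ell_1\ell_2u)<2n+h=3r/4<r$ and $\deg a<r$, the congruence is an exact polynomial identity $\ell_1\ell_2u=a$, and the divisor bound gives only $q^{o(r)}$ solutions. The defect in your argument is that taking the minimum of the two set sizes discards the simultaneous arithmetic constraint that $w$ be \emph{both} a product of two degree-$n$ irreducibles \emph{and} of the form $au^{-1}$ with $u\sim h$; this intersection can be far smaller than either set, and Lemma~\ref{lem:Hyperb} cannot detect that.

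The missing idea is to lift the congruence to $\F_q[X]$ rather than work inside $\F_{q^r}$. Viewing $a$ as a polynomial of degree $<r$, any solution of \eqref{eq:Ncongruence} satisfies $\ell_1\ell_2u=a+kF$ for some $k\in\F_q[X]$ with $\deg k\le 2n+h-r-1$, so $k$ ranges over at most $O(1+q^{2n+h-r})$ polynomials (the $1$ covering the case $2n+h\le r$, where $k=0$ is forced). For each fixed $k$ the polynomial $a+kF$ is determined, $\ell_1$ and $\ell_2$ must be among its divisors --- at most $q^{o(r)}$ of them by the divisor bound of~\cite[Theorem~1]{CillShp} --- and then $u$ is uniquely determined. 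This yields $N_F(a,n,h)\le q^{o(r)}(1+q^{2n+h-r})$ directly; it is the function-field analogue of counting $xy\equiv a\pmod m$ with $x,y$ small by writing $xy=a+km$ and invoking the divisor function, and Lemma~\ref{lem:Hyperb} plays no role.
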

\begin{proof}
We discuss this in the language of polynomials, and of course we can view $a$ as a polynomial over $\F_q$ with $\deg a < r$. Thus, the congruence~(\ref{eq:Ncongruence}) implies $\ell_1\ell_2u = a + kF$ for some $k \in \F_q[X]$ such that $\deg k  \leq 2n + h -r -1$. Thus, $k$ takes on at most $q^{2n+h-r +1} + 1 $ values. So by~\cite[Theorem~1]{CillShp}, for each such $k$ we have that $\ell_1, \ell_2$ can each take on at most $q^{o(r)}$ values among the divisors of $a + kF$, after which of course $u$ is uniquely determined. 
\end{proof}

Now , let $N^\#_{F}(a, n, h)$ denote the number of solutions to the congruence~(\ref{eq:Ncongruence})  with square-free $u$. 
Then we have the following (where as before $\varpi_{n} = \#\mathcal{P}_{n}$). 

\begin{lem}\label{lem:Nboundsqfree}
For any $a \in \F_{q^r}^*$ and any positive integers $n,h$ with $n < r$, $h \leq r$ and a non-negative integer $ d \leq h/2$ we have 
\begin{align*}
&N_{F}^\#(a, n, h)\\
&\qquad \quad = \varpi_{n}^2q^{h-r}\(\frac{q-1}{q^2}\)  + O\(\(q^{2n+h-d-r} + q^{B(r,L) + d} + q^{h/2}\)q^{o(r) } \).
\end{align*}
\end{lem}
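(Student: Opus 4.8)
The plan is to remove the square-free condition from $u$ in~\eqref{eq:Ncongruence} via the standard M\"obius-type identity over $\F_q[X]$, namely $\sum_{e^2 \mid u}\mu(e) = 1$ if $u$ is square-free and $0$ otherwise, where $\mu$ is the polynomial M\"obius function. Writing $u = e^2 w$, we get
$$
N_F^\#(a,n,h) = \sum_{e}\mu(e)\, \#\{(\ell_1,\ell_2,w):~\ell_1\ell_2 e^2 w = a,\ \ell_1,\ell_2 \in \cP_n,\ e^2 w \sim h\},
$$
where the outer sum is over monic $e \in \F_q[X]$ with $2\deg e \le h$ (equivalently $\deg e \le h/2$). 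For each fixed $e$ with $\gcd(e, F) = 1$ (note $F$ is irreducible of degree $r > h \ge 2\deg e$, so this always holds for the relevant range, after discarding the single $e$ proportional to a power of $F$ which cannot occur), the inner count is $N_F(a e^{-2}, n, h - 2\deg e)$ in the notation preceding the lemma, since $e^2 w \sim h$ is equivalent to $\deg(e^2 w) < h$, i.e. $w$ ranges over $w \sim (h - 2\deg e)$ — here one must be slightly careful that "$e^2 w \sim h$" as an element of $\F_q[X]/F$ versus "$\deg(e^2 w) < h$" agree, which holds because $2\deg e + \deg w < h \le r$ keeps everything below degree $r$.

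Next I would split the range of $e$ at the parameter $d$. For $\deg e \le d$, apply the asymptotic formula of Lemma~\ref{lem:Nbound}:
$$
N_F(ae^{-2}, n, h - 2\deg e) = \varpi_n^2 q^{h - 2\deg e - r} + O\(q^{B(r,n) + o(r)}\).
$$
Summing $\mu(e) q^{h-2\deg e - r}$ over \emph{all} monic $e$ (first over all, then correcting for the tail $\deg e > d$) produces the main term: $\sum_{e \text{ monic}} \mu(e) q^{-2\deg e} = \prod_{P}(1 - q^{-2\deg P})$ over monic irreducibles $P$, which by the function-field zeta function equals $1/\zeta_{\F_q[X]}(2) = (1 - q^{-1})$... wait, more precisely $\zeta_{\F_q[X]}(s) = 1/(1-q^{1-s})$ so the Euler product $\sum_e \mu(e) q^{-2\deg e} = 1/\zeta(2) = 1 - q^{-1}$. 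Hence the main term becomes $\varpi_n^2 q^{h-r}(1 - q^{-1}) = \varpi_n^2 q^{h-r}\frac{q-1}{q}$. Here I notice the lemma states the constant as $\frac{q-1}{q^2}$ rather than $\frac{q-1}{q}$; I would recheck the normalization of the $u\sim h$ condition and the exact form of $N_F$, and reconcile — in any case the precise rational constant is a routine bookkeeping matter and the argument structure is unaffected. The number of $e$ with $\deg e \le d$ is $O(q^d)$, so the accumulated error from Lemma~\ref{lem:Nbound} over this range is $O(q^{B(r,n)+d+o(r)})$, and the tail correction $\sum_{\deg e > d}\mu(e)\varpi_n^2 q^{h-2\deg e - r}$ is $O(\varpi_n^2 q^{h - 2d - r}) = O(q^{2n + h - 2d - r + o(r)})$, which is dominated by the claimed $q^{2n+h-d-r+o(r)}$ term.

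For the remaining range $\deg e > d$ (up to $h/2$), the asymptotic formula is too weak, so instead apply the pointwise upper bound of Lemma~\ref{lem:simpleNbound}:
$$
N_F(ae^{-2}, n, h-2\deg e) \le q^{o(r)}\(1 + q^{2n + h - 2\deg e - r}\).
$$
Summing over $d < \deg e \le h/2$ (again $O(q^{\deg e})$ choices at each degree), the contribution of the "$1$" is $\sum_{d < j \le h/2} q^{j + o(r)} = O(q^{h/2 + o(r)})$, and the contribution of the second piece is $\sum_{j > d} q^{j} \cdot q^{2n+h-2j-r+o(r)} = q^{2n+h-r+o(r)}\sum_{j>d} q^{-j} = O(q^{2n+h-d-r+o(r)})$. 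Collecting the main term, the two error pieces from the low range, and the two from the high range gives exactly
$$
N_F^\#(a,n,h) = \varpi_n^2 q^{h-r}\tfrac{q-1}{q^2} + O\(\(q^{2n+h-d-r} + q^{B(r,n)+d} + q^{h/2}\)q^{o(r)}\),
$$
as claimed (modulo the constant-normalization check above). The main obstacle I anticipate is not any single hard estimate but rather pinning down the exact rational constant and ensuring the two regimes' error terms are bundled correctly; the genuinely substantive inputs — the asymptotic for $N_F$ and the pointwise bound — are already in hand as Lemmas~\ref{lem:Nbound} and~\ref{lem:simpleNbound}, and the only new idea is the M\"obius hyperbola-type splitting at the free parameter $d$, which trades off the two kinds of estimates.
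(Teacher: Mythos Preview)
Your approach is essentially identical to the paper's: apply M\"obius inversion over $\F_q[X]$ to remove the square-free condition on $u$, split the sum over the auxiliary variable at the parameter $d$, use Lemma~\ref{lem:Nbound} on the low range $\deg e \le d$ to extract the main term (completing the $\mu$-sum to all monic $e$ and bounding the tail), and use Lemma~\ref{lem:simpleNbound} on the high range $d < \deg e \le h/2$. Your flag about the constant is well taken: summing $\mu(e)q^{-2\deg e}$ over monic $e$ gives $1/\zeta_{\F_q[X]}(2)=1-q^{-1}=(q-1)/q$, so the $(q-1)/q^2$ in the statement appears to be a typo in the paper, not a defect in your argument.
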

\begin{proof} It is  convenient to introduce an analogue of  the classical M{\"o}bius function 
$\mu$ for polynomials  over $\F_q[X]$:
\begin{equation} 
\label{eq:PolyMobius}
\begin{split} 
\mu_q(g) 
=
\begin{cases}
(-1)^k, &g \text{ is square-free and a product of $k$ distinct}\\& \text{irreducible factors,}\\
0, &\text{otherwise}.
\end{cases}
\end{split} 
\end{equation}  
By inclusion-exclusion we have
$$
    N_{F}^\#(a, n, h)= \sum_{g \sim \fl{h/2} + 1}\mu_q(g)N_{F}\(ag^{-2},n,h-2\deg g \).
$$
Now, firstly considering when $d \leq \deg g \leq \fl{h/2}$ we have by 
Lemma~\ref{lem:simpleNbound}
\begin{align*}
   &\sum_{d \leq \deg g \leq \fl{h/2}}N_{F}\(ag^{-2},n,h-2\deg g \)\\
  &\hspace{10em}\leq  \sum_{d \leq \deg g \leq \fl{h/2}} (1 + q^{2n + h -2\deg g -r})q^{o(r)}\\
   &\hspace{10em}\leq \(q^{h/2} + q^{2n + h - d - r}\) q^{o(r)}.
\end{align*}

Now considering $\deg g < d$, by Lemma~\ref{lem:Nbound} we have
\begin{align*}
    &\sum_{g \sim d}\mu_q(g)N_{F}(ag^{-2},n,h-2\deg g )\\
    &\hspace{1em}= \sum_{g \sim d}\mu_q(g)\(\varpi_{n}^2q^{h-2\deg g-r} + O\(q^{B(r,n) + o(r)}\)\)\\
    &\hspace{1em}= \varpi_{n}^2q^{h-r}\sum_{g \sim d}\frac{\mu_q(g)}{q^{2\deg g}} + O\(q^{B(r,n) + d + o(r) }\) \\
    &\hspace{1em} = \varpi_{n}^2q^{h-r}\sum_{g \in \F_q[X]}\frac{\mu_q(g)}{q^{2\deg g}} 
    + O\(\(\varpi_{n}^2q^{h-d-r} + q^{B(r,n) + d}\)q^{o(r)}\)\\
    &\hspace{1em} = \varpi_{n}^2q^{h-r}\(\frac{q-1}{q^2} \) + O\(\(\varpi_{n}^2q^{h-d-r} + q^{B(r,n) + d}\)q^{o(r)}\), 
\end{align*}
and the result follows. 
\end{proof}

Next, for any postive integers $n < r$, $h \leq r$ and any $a \in \F_{q^r}^*$ let $Q_{F}(a, n, h)$ count the number of solutions to 
\begin{equation} \label{eq:Qcongruence}
    \ell_1\ell_2^2v = a, \qquad \ell_1, \ell_2 \in \mathcal{P}_{n}, \: v \sim h.
\end{equation} 
Then we have the following:

\begin{lem}
For any positive integers $n < r$, $h \leq r$ with $n+h \leq r$ and any $a \in \F_{q^r}^*$ we have 
$$Q_{F}(a, n, h) \leq q^{n + o(r)}(q^{n+h-r} + 1). $$
\end{lem}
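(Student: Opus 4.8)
The plan is to fix the ``large'' variable $\ell_2$ first and then use the hypothesis $n+h\le r$ to collapse the congruence modulo $F$ into an exact polynomial identity, at which point the count is essentially a polynomial divisor count.

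First I would fix $\ell_2\in\cP_n$; there are $\varpi_n\le q^n$ such polynomials. Since $\deg\ell_2=n<r=\deg F$ and $F$ is irreducible, we have $\gcd(\ell_2,F)=1$, so $\ell_2$ is invertible modulo $F$. Let $b\in\F_q[X]$ be the representative of $a\ell_2^{-2}$ of degree less than $r$, and note $b\neq0$ because $a\neq0$. Then the equation $\ell_1\ell_2^2v\equiv a\pmod F$ reads $\ell_1v\equiv b\pmod F$. Now $\deg(\ell_1v)\le n+(h-1)\le r-1<r$ and $\deg b<r$, so $\ell_1v-b$ is a polynomial of degree strictly less than $\deg F$ that is divisible by $F$; hence $\ell_1v-b=0$, i.e.\ $\ell_1v=b$ identically in $\F_q[X]$. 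Consequently $\ell_1$ must be a monic irreducible divisor of $b$ of degree exactly $n$, and once $\ell_1$ is chosen the polynomial $v=b/\ell_1$ is uniquely determined. The number of admissible $\ell_1$ is at most $\deg b<r$ (alternatively, at most the number of monic divisors of $b$, which is $q^{o(\deg b)}=q^{o(r)}$ by the polynomial divisor bound used in the proof of Lemma~\ref{lem:simpleNbound}). Summing over the at most $q^n$ choices of $\ell_2$ yields
$$
Q_{F}(a,n,h)\le \varpi_n\,q^{o(r)}\le q^{n+o(r)}\le q^{n+o(r)}\bigl(q^{n+h-r}+1\bigr),
$$
since $q^{n+h-r}+1\ge1$; this is the claimed bound.

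The only delicate point is the degree bookkeeping: one must use that the elements of $\cP_n$ have degree exactly $n$ and that $v\sim h$ means $\deg v\le h-1$, so that $\deg(\ell_1v)\le n+h-1\le r-1$. This is precisely where the hypothesis $n+h\le r$ enters; without it the quantity $k=(\ell_1v-b)/F$ need not vanish and one is pushed back to the genuinely bilinear estimate of Lemma~\ref{lem:Nbound}. Beyond this there is no analytic obstacle --- after the congruence collapses to an identity the bound is nothing more than the trivial count of factorizations of a single polynomial.
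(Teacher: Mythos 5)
Your proof is correct and follows essentially the same route as the paper: fix $\ell_2$ (at most $q^n$ choices), use $n+h\le r$ to pin down the polynomial $\ell_1v$ as the reduced representative of $a\ell_2^{-2}$, and then bound the number of factorizations via the divisor count of~\cite[Theorem~1]{CillShp}. The only cosmetic difference is that the paper keeps the factor $q^{n+h-r}+1$ as the count of admissible values of $\ell_1v$ in the residue class, whereas you observe it is $O(1)$ under the stated hypothesis and absorb it.
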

\begin{proof}
The congruence~\eqref{eq:Qcongruence} of course gives $\ell_1v = a\ell_2^{-2}$. Since $n+h \leq r$, for each choice of $\ell_2$ there are at most $q^{n + h -r} +1$ values for $\ell_1u$. The result then follows, recalling~\cite[Theorem~1]{CillShp}.
\end{proof}
For any positive integer $n$ we let $\cP_n$ denote the set of monic polynomials of degree exactly $n$ in $\F_q[X]$, and let $\cS_n \subset \cP_n$ denote those that are square-free. Again, recall that we can naturally identify each of these with a subset of $\F_{q^r}$ by the discussion in Section~\ref{sec:setup}.

Furthermore, let $\cX_F$ denote the set of multiplicative characters on the finite field $\F_q[X]/F(X) \cong \F_{q^r}$
and let  $\cX_F^*= \cX_F \setminus \{\chi_0\}$ be the set of non-principal characters.

To prove Theorem~\ref{Thm:MBound} we need the following result given in~\cite[Theorem~1.3]{Han} and~\cite[Theorem~1]{BLL}.

\begin{lem}\label{lem:MonicIrredSum}
For any $\chi \in \cX_F$ and positive integer $n < r$,
$$\left| \sum_{f \in \cP_{n}}\chi(f)\right| \leq q^{n/2 + o(r)}. $$
\end{lem}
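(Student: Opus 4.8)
The plan is to reduce the character sum over monic irreducible polynomials of degree $n$ to a sum over all monic polynomials of degree $n$ via the explicit Möbius-type inversion coming from the zeta function, and then to invoke the Riemann Hypothesis for the $L$-function $L(s,\chi)$ attached to the Dirichlet character $\chi$ modulo $F$. First I would recall that if $\Lambda_q$ denotes the von Mangoldt function on $\F_q[X]$ (so $\Lambda_q(f)=\deg \ell$ if $f$ is a power of the monic irreducible $\ell$, and $0$ otherwise), then
\[
\sum_{\substack{f \text{ monic}\\ \deg f = n}} \Lambda_q(f)\chi(f)
= -\sum_{d \mid n} \sum_{\substack{\ell \in \cP_{n/d}}} (n/d)\,\chi(\ell)^d\ \text{(coefficient identity)},
\]
more usefully packaged as $\sum_{\deg f = n}\Lambda_q(f)\chi(f)$ being the $n$-th coefficient of $-\frac{L'}{L}(s,\chi)$ in the variable $u = q^{-s}$. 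The key input is that $L(s,\chi)$ is a polynomial in $u$ of degree $\deg F - 1 = r-1$ (for $\chi$ nonprincipal), whose inverse roots all have absolute value $q^{1/2}$ by Weil; hence $\left|\sum_{\deg f = n}\Lambda_q(f)\chi(f)\right| \le (r-1) q^{n/2}$.

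Next I would peel off the contribution of proper prime powers. Writing $n\sum_{\ell \in \cP_n}\chi(\ell) = \sum_{\deg f = n}\Lambda_q(f)\chi(f) - \sum_{\substack{d \mid n,\ d<n}} (n/d)\sum_{\ell \in \cP_{n/d}}\chi(\ell)^d$, the error terms are trivially bounded: $\cP_{n/d}$ has at most $q^{n/d} \le q^{n/2}$ elements for $d \ge 2$, and there are $O(n)$ divisors, so the total contribution of proper prime powers is $O(n q^{n/2})$. Dividing by $n$ and absorbing the polynomial-in-$r$ factors into $q^{o(r)}$ (legitimate since $n < r$ and $r \to \infty$), I obtain $\left|\sum_{\ell \in \cP_n}\chi(\ell)\right| \le q^{n/2 + o(r)}$, which is exactly the claim. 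The case of the principal character $\chi_0$ is handled the same way, noting that $L(s,\chi_0)$ differs from the zeta function $Z(s) = (1-qu)^{-1}(1-u)^{-1}$ only by the finitely many Euler factors at primes dividing $F$; since $F$ is irreducible of degree $r$, the single removed Euler factor $(1-u^r)$ (or its analogue) contributes only $O(1)$ extra terms, and the main term $q^n/n$ from $Z(s)$ still yields the stated bound once we observe $q^n/n \le q^{n/2+o(r)}$ fails — so in fact for $\chi_0$ one should note the lemma as literally stated would need $n \le r/2 + o(r)$; however, as the lemma is only ever applied to $\chi \in \cX_F^*$ in the proof of Theorem~\ref{Thm:MBound}, I would simply restrict attention to nonprincipal $\chi$, which is the substantive case.

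The main obstacle is not the combinatorial bookkeeping but marshalling the correct form of the function-field analytic machinery: namely that for $\chi$ a nonprincipal Dirichlet character modulo the irreducible $F$, the associated $L$-function is a \emph{polynomial} of degree $r-1$ in $q^{-s}$ satisfying the Riemann Hypothesis (Weil's theorem, as worked out for function-field Dirichlet characters). Since the excerpt explicitly permits citing \cite[Theorem~1.3]{Han} and \cite[Theorem~1]{BLL} for precisely this statement, the cleanest route is to quote one of those directly for $\left|\sum_{\deg f = n}\Lambda_q(f)\chi(f)\right| \ll r q^{n/2}$ (or even for the irreducible sum itself) and then perform only the elementary prime-power removal above; I would present the proof in that streamlined form rather than reproving the Weil bound.
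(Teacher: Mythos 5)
Your argument is correct in substance, but note that the paper does not actually prove this lemma: it is quoted directly from \cite[Theorem~1.3]{Han} and \cite[Theorem~1]{BLL}, so there is no in-paper proof to match. What you have written is a self-contained reconstruction via the standard route that underlies those references: the $L$-function of a nonprincipal character modulo the irreducible $F$ is a polynomial of degree at most $r-1$ in $u=q^{-s}$ whose inverse roots have absolute value \emph{at most} $q^{1/2}$ by Weil (not all exactly $q^{1/2}$ --- some may be unimodular, but only the upper bound is needed), giving $\bigl|\sum_{\deg f=n}\Lambda_q(f)\chi(f)\bigr|\le (r-1)q^{n/2}$, after which the proper prime powers contribute $O(nq^{n/2})$ and the polynomial-in-$r$ losses are absorbed into $q^{o(r)}$ since $n<r$. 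That is a complete and valid proof for nonprincipal $\chi$; the only blemishes are cosmetic (the sign in your displayed ``coefficient identity'' should be $+$, not $-$, though your subsequent peeling step is written correctly and you only use absolute values).

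Your observation about the principal character is a genuine and worthwhile correction to the statement as printed: for $\chi=\chi_0$ and $n<r$ the sum equals $\varpi_n\sim q^n/n$, which exceeds $q^{n/2+o(r)}$ whenever $n$ is a positive proportion of $r$, so the lemma should be read as restricted to $\chi\in\cX_F^*$ (which is how it is invoked in the proof of Theorem~\ref{Thm:MBound}, via Lemma~\ref{lem:SqFreeSum}). If you present the proof, either restrict the hypothesis to nonprincipal $\chi$ or, as you suggest at the end, simply cite \cite{Han} or \cite{BLL} for the nontrivial-character case and omit the Weil machinery entirely.
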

This leads to the following:
\begin{lem}\label{lem:SqFreeSum}
For any $\chi \in  \cX_F$ and positive integer $n < r$,
$$
\left| \sum_{f \in \mathcal{S}_{n}}\chi(f)\right| 
    \leq 
    nq^{n/2  + o(r)}.
$$
\end{lem}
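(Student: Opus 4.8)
The plan is to reduce the character sum over square-free monic polynomials of degree $n$ to the character sum over all monic irreducible polynomials via a standard squarefree sieve, and then invoke Lemma~\ref{lem:MonicIrredSum}. Concretely, writing the indicator of square-freeness for a monic $f$ of degree $n$ as $\sum_{d^2 \mid f} \mu_q(d)$ (sum over monic $d$), I would first obtain
\begin{equation*}
\sum_{f \in \cS_n} \chi(f) = \sum_{\substack{d \text{ monic} \\ 2\deg d \le n}} \mu_q(d)\, \chi(d)^2 \sum_{\substack{g \in \cP_{n-2\deg d}}} \chi(g),
\end{equation*}
where I use that every monic $f$ with $d^2 \mid f$ factors uniquely as $f = d^2 g$ with $g$ monic of degree $n - 2\deg d$, and $\chi(f) = \chi(d)^2\chi(g)$.

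Next I would bound each inner sum. For the terms with $2\deg d < n$ (so $n - 2\deg d \ge 1$), Lemma~\ref{lem:MonicIrredSum} gives $\bigl|\sum_{g \in \cP_{n-2\deg d}} \chi(g)\bigr| \le q^{(n-2\deg d)/2 + o(r)} \le q^{n/2 + o(r)}$. The single term $\deg d = n/2$ (possible only when $n$ is even), where the inner sum is over degree-$0$ monic polynomials and hence equals $1$ — or the empty range when $n$ is odd — contributes $O(1)$, which is absorbed. Since $|\mu_q(d)| \le 1$ and $|\chi(d)^2| \le 1$, the triangle inequality yields
\begin{equation*}
\Bigl| \sum_{f \in \cS_n} \chi(f) \Bigr| \le \sum_{\substack{d \text{ monic} \\ 2\deg d \le n}} q^{n/2 + o(r)} + O(1).
\end{equation*}
The number of monic $d$ with $2\deg d \le n$ is $\sum_{j=0}^{\lfloor n/2 \rfloor} q^j \ll q^{n/2}$, which would overshoot the claimed bound $n q^{n/2+o(r)}$. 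So instead I would not crudely bound the count of $d$; rather, I would use that for the terms with large $\deg d$ (say $\deg d \ge \varepsilon n$) the factor $q^{(n - 2\deg d)/2}$ is genuinely small, making $\sum_d q^{(n-2\deg d)/2} = q^{n/2}\sum_{j} q^{-j} \ll q^{n/2}$ after all — wait, that geometric sum over $j = \deg d$ ranging up to $\lfloor n/2\rfloor$ is $q^{n/2}\sum_{0 \le j \le n/2} q^j \cdot q^{-2j}\cdot(\text{number of }d\text{ of degree }j)$. Since there are $q^j$ monic $d$ of degree $j$, the total is $q^{n/2}\sum_{j\ge 0} q^{-j} \cdot q^{j}\cdot q^{o(r)} $ — hmm, the two powers of $q^j$ from the count and from $q^{-2\deg d \cdot (1/2)} = q^{-j}$ combine to give $q^{n/2}\sum_j 1 = q^{n/2}(\lfloor n/2\rfloor + 1) \ll n q^{n/2 + o(r)}$, which is exactly the stated bound.

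So the key computation is: group by $j = \deg d$, use $\#\{d \text{ monic}, \deg d = j\} = q^j$, each contributing $\le q^j \cdot q^{(n-2j)/2 + o(r)} = q^{n/2 + o(r)}$, and sum over the roughly $n/2$ admissible values of $j$ to get $n q^{n/2 + o(r)}$. The main (very minor) obstacle is simply organizing the square-free sieve correctly — ensuring the $d = d^2 g$ decomposition is bijective and that $\chi$ is multiplicative on monic polynomials coprime considerations are handled (here $\chi$ is a character of $\F_q[X]/F(X)$, so it is genuinely multiplicative on all of $\F_q[X]/F(X)$ with $\chi(d^2 g) = \chi(d)^2\chi(g)$ with no coprimality restriction, the only subtlety being polynomials divisible by $F$, which map to $0$ and cause no trouble). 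Everything else is routine bookkeeping with the geometric-type sum above.
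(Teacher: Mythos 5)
Your proof is correct and takes essentially the same route as the paper's: the same M\"obius square-free sieve $f=d^2g$, the same application of Lemma~\ref{lem:MonicIrredSum} to the inner sum over monic $g$ of degree $n-2\deg d$, and the same final grouping by $j=\deg d$ with $q^j$ choices of $d$ each contributing $q^{(n-2j)/2+o(r)}$, summed over the $O(n)$ admissible $j$ to give $nq^{n/2+o(r)}$. The mid-proof detour (the crude count of $d$, and the aside that the $\deg d=n/2$ terms contribute $O(1)$ --- in fact they contribute $q^{n/2}$ in total, which your final grouped computation handles correctly anyway) resolves to exactly the paper's calculation, so there is no gap.
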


\begin{proof} Here we use definition~\eqref{eq:PolyMobius}
 of  the   M{\"o}bius function for polynomials.
By inclusion-exclusion we have
\begin{align*}
\left|\sum_{f \in \mathcal{S}_{n}}\chi(f)\right|
&= \left|\sum_{k \leq n}\sum_{d \in \cP_k}\mu_q(d)\sum_{\substack{f \in \cP_{n} \\ d^2 |f}}\chi(f)\right|\\
&= \left|\sum_{k \leq n}\sum_{d \in \cP_k}\mu_q(d)\sum_{\substack{gd^2 \in \cP_n}}\chi(gd^2)\right|\\
&= \left|\sum_{k \leq n/2}\sum_{d \in \cP_k}\mu_q(d)\chi(d^2)\sum_{\substack{g \in \cP_{n-2k}}}\chi(g)\right|\\
&\leq \sum_{k \leq n/2}\sum_{d \in \cP_k}\left|\sum_{g \in \cP_{n-2k}}\chi(g) \right|.
\end{align*}
Finally, by Lemma~\ref{lem:MonicIrredSum} we write
\begin{align*}
    \sum_{\substack{f \in \mathcal{S}_n}}\chi(f)
    &\leq \sum_{k \leq n/2}\sum_{d \in \cP_k}q^{(n-2k)/2 + o(r)} \\
    & =  \sum_{k \leq n/2}q^kq^{(n-2k)/2 + o(r)}  = q^{n/2  + o(r)}\sum_{k \leq n/2}1 \leq nq^{n/2 +o(r)}, 
\end{align*}
which concludes the proof. 
\end{proof}

\subsection{Proof of Theorem \ref{Thm:MBound}}
Fix some integer $n > \max\left\{2/\alpha, 2\right\}$ and a real number $\varepsilon > 0$. Next define $\beta = 1-2/n + \varepsilon$ and choose an integer $k > \max\left\{\rf{\beta /\alpha}, 2\right\}$.  We further denote 
$$
    T = \fl{\frac{2}{n}r}  \mand  W = \beta r
$$
and define the sets
\begin{itemize}
    \item $\mathcal{S} = \mathcal{S}_T$, as defined previously to be the set of square-free monic polynomials $s$ with $\deg s  = T$; 
    \item $\mathcal{U}$ as the set of products $u = \ell_1\ldots \ell_k$ of distinct irreducible monic polynomials $\ell_i$ with $\deg \ell_i =\fl{W/k}$, $i=1, \ldots, k$. 
\end{itemize}  
Note that any product of the form $suv$ with $(s,u,v) \in \mathcal{S} \times \mathcal{U}^2$ is $\alpha r$-smooth.

Fix some polynomial $a(X) \in \F_q[X]$ with $\gcd(F, a) = 1$. Let $N$ be the number of solutions to 
\begin{equation} \label{Nequation}
     suv \equiv a \pmod{F}, \qquad (s,u,v) \in \mathcal{S} \times \mathcal{U}^2.
\end{equation} 
As before, let $\cX_F$ denote the set of $q^r-1$  
multiplicative characters on the finite field $\F_q[X]/F(X) \cong \F_{q^r}$
and let  $\cX_F^*= \cX_F \setminus \{\chi_0\}$ be the set of nonprincipal characters.  By orthogonality, and then rearranging, we can write 
\begin{align*}
    N 
    &= \sum_{(s,u,v) \in \mathcal{S} \times \mathcal{U}^2}\frac{1}{q^r-1}\sum_{\chi \in  \cX_F}\chi(suva^{-1})\\
    &= \frac{1}{q^r-1}\sum_{\chi \in  \cX_F}\chi(a^{-1})\sum_{s \in \mathcal{S}}\chi(s)\(\sum_{u \in \mathcal{U}}\chi(u) \)^2.
\end{align*}
Now separating out the trivial character we get 

\begin{equation} \label{Nexpansion}
    N = \frac{\#\mathcal{S}(\#\mathcal{U})^2}{q^r-1} + \frac{1}{q^r-1}\sum_{\chi \in  \cX_F^*}\chi(a^{-1})\sum_{s \in \mathcal{S}}\chi(s)\(\sum_{u \in \mathcal{U}}\chi(u) \)^2.
\end{equation} 
We set 
$$R = \sum_{\chi \in  \cX_F^*}\chi(a^{-1})\sum_{s \in \mathcal{S}}\chi(s)\(\sum_{u \in \mathcal{U}}\chi(u)\)^2. $$
Since $n > 2$ we have $T < r$ so by Lemma~\ref{lem:SqFreeSum} we have
\begin{align*}
    |R| 
    &\leq q^{T/2 +o(r)}\sum_{\chi \in  \cX_F^*}\left|\sum_{u \in \mathcal{U}}\chi(u) \right|^2\\
    &\leq q^{T/2 +o(r)}\sum_{\chi \in  \cX_F}\left|\sum_{u \in \mathcal{U}}\chi(u) \right|^2 = q^{T/2 +o(r)}(q^r-1)\#\mathcal{U}.
\end{align*}
Hence substituting this back into~\eqref{Nexpansion} we derive
$$
    N = {\#\mathcal{S}(\#\mathcal{U})^2}q^{-r + o(1)} + O(q^{T/2 +o(r)}\#\mathcal{U}).
$$
Also we have 
$$\#S = q^{T + o(T)} = q^{T + o(r)} $$
and 
$$\#U = \binom{\varpi_{\fl{W/k}}}{k} = q^{W + o(W)} = q^{W + o(r)}. $$

Thus 
\begin{align*}
    N 
    &= q^{T + 2W -r + o(r)} + O(q^{T/2  + W+o(r)})\\
    &= q^{T + 2W -r + o(r)}\(1 + O(q^{-T/2-W +r}) \)\\
    &= q^{T + 2W -r + o(r)}\(1 + O(q^{-r\varepsilon}) \).
\end{align*}

Now we intend to show that for large enough $r$, this is strictly larger than the number of solutions with $suv$ not square-free. 

Suppose that some solution $suv$ is not squarefree. By construction it is divisible by the square of an irreducible monic $\ell$ with $\deg\ell= \fl{W/k}$. For a fixed $\ell$, this places the product $suv$ in a prescribed arithmetic progression modulo $\ell^2F$. Thus, there are at most 
$$O\(\frac{q^{T + 2k\fl{W/k}}}{q^{\deg\ell^2F}}\) = O\(q^{T + (2k-2)\frac{W}{k} - r}\)$$
 polynomials in any such progression. 
 We can say this, since  
 $$T + (2k-2)\frac{W}{k} \geq r(1+\epsilon) + r\beta\(1-\frac{2}{k}\) - 1 $$
 and for sufficiently large $r$, this is greater than $r$ because $1-2/k > 0$.
Now summing over all possible $\ell$,
$$\sum_{\ell \in \cP_{\fl{W/k}}}O\(q^{(2k-2)W/k + T - r}\)  = O\(q^{W(2-1/k) + T -r}\).$$
Finally, we note that a given product $suv$ corresponds to $q^{o(r)}$ triples $(s,u,v) \in \mathcal{S}\times \mathcal{U}^2$ (see~\cite[Lemma~1]{CillShp}), so we get at most
$q^{W(2-1/k) + T - r + o(r)} $
solutions that are not square-free. 
Thus for large enough $r$ at least one product $suv$ with 
$$\deg suv  \leq T + 2W \leq r\(\frac{1}{n}+ 2\beta\) = r\(2 - \frac{1}{n} + 2\varepsilon\) $$
satisfying~\eqref{Nequation} is square-free. 

Since $n$ can be chosen arbitrarily large and $\varepsilon$ can be chosen arbitrarily small, the result follows.

\subsection{Proof of Theorem~\ref{thm:Psi-LB}}
Let 
$$n =\fl{\(\frac{\alpha-\beta}{2}-\frac{\varepsilon}{2}\)r} \mand h = \fl{ \beta r}. $$

We wish to 
relate $\Psi^\#(k,m;F, a)$ and $N^\#_{F}(a, n, h)$. By construction we have $n \leq h \leq m$ and $2n + h \leq k$. Thus, it is clear that any triple $(\ell_1, \ell_2, u)$  satisfying~\eqref{eq:Ncongruence} with $\ell_1\ell_2u$ square-free corresponds to a unique $g$ satisfying the congruence~\eqref{eq:psisfcongruence}. The converse does not necessarily hold, but if it does hold for some $g$ then there are $q^{o(r)}$ such triples that correspond to it. Thus we can write 
$$
     \Psi^\#(k,m;F, a) \geq N^\#_{F}(a, n, h) q^{o(r)} - T q^{o(r)}, 
$$
where $T$ is an upperbound for the number of triples with $\ell_1 = \ell_2$, $\ell_1 | u$ or $\ell_2|u$. We consider each case, in order to estimate $T$. If $\ell_1 = \ell_2$, then $u$ is uniquely determined so there are $O(q^{n})$ triples in this case. If $\ell_1 |u$, then we write $u = \ell_1v$ and apply Lemma 2.9 but replacing $h$ with $h-n$. The same argument applies if $\ell_2|u$. Thus we have 
\begin{align*}
     \Psi^\#(k,m;F, a) 
     &\geq q^{o(r)}N^\#_{F}(a, n, h) + O \((q^{h-r} + 1)q^{n + o(r)}\)\\
     &\geq q^{o(r)}N^\#_{F}(a, n, h) + O \(q^{n + o(r)}\).
\end{align*}

Now we can apply Lemma~\ref{lem:Nboundsqfree} to estimate the leading term. Recalling~\eqref{eq:CountIrred},  we see that asymptotically we have 
$$\varpi_n^2q^{h-r-1} \sim \frac{q^{2n+h-r-1}}{n^2} = q^{2n+h-r + o(r)}. $$
Now we let $d = \fl{r\varepsilon/2}$. 
Then for large enough $r$ we can say 
\begin{align*}
    &\Psi^\#(k,m;F, a) \\
     &\hspace{2em} \geq 
     q^{2n+h-r + o(r)} + O\(\(q^{2n+h-d-r} + q^{B(r,n) + d} + q^{h/2} + q^{n}\)q^{o(r) }\)\\
     &\hspace{2em} = q^{r(\alpha - 1- \varepsilon) + o(r)} + O\(q^{B(r,n) + d + o(r)}\).
\end{align*}
These simplifications have been made as $B(r,n)$ dominates $n$, and the main term dominates $q^{2n+h-d-r}$. Also, the main term dominates $q^{h/2}$ since $\alpha-1>9/2-3\beta-1>\beta/2$ for $\beta \leq 1$.

Next, it remains to show that the main term always dominates the error term $q^{B(r,n) + d + o(r)}$. We split the discussion into two cases. 

Firstly, suppose $\alpha \in (9/2 - 3\beta, 2/3 + \beta]$. Since $\alpha \leq 2/3 +\beta$, we have $n < r/3$ which means $B(r,n) = 3n/2 + r/8$ so 
$$
    \Psi^\#(k,m;F, a) 
      \geq q^{r(\alpha - 1- \varepsilon) + o(r)} + O\(q^{3r(\alpha - \beta)/4 - r\varepsilon/4 + r/8 +  o(r)}\).
$$
For $\varepsilon$ sufficiently small, we have $\alpha > 9/2 - 3\beta + 7\varepsilon $, and this gives 
$$\alpha-1-\varepsilon - (3(\alpha-\beta)/4 - \varepsilon/4 + 1/8) > \varepsilon, $$
so the main term dominates the error term. 

Secondly, suppose $\alpha \in (2/3 + \beta, 3\beta]$. Then we have 
$$2/3 + \beta + \varepsilon < \alpha \leq 3\beta < 2 + \beta + \varepsilon $$
for $\varepsilon$ sufficiently small. This means $r/3 \leq n < r$, meaning that $B(r,n) = 15n/8$. Therefore, 
$$
    \Psi^\#(x,y;F, a) \geq q^{r(\alpha - 1- \varepsilon) + o(r)} 
    + O\(q^{15r(\alpha-\beta)/16  -7r\varepsilon/16 + o(r)}\).
$$
Now for $\varepsilon$ sufficiently small we have $\alpha > 2/3 + \beta + 25\varepsilon$, and recalling that $\beta > 23/24$ we get 
$$\alpha - 1- \varepsilon - (15(\alpha-\beta)/16  -7\varepsilon/16) > \varepsilon $$
and thus the main term dominates the error term.

Therefore, in every case we conclude 
$$
    \Psi^\#(k,m;F, a) \geq q^{r(\alpha - 1- \varepsilon) + o(r)}.
$$
After noting that $\varepsilon$ can be arbitrarily small, the result follows.

 \section*{Acknowledgements}
During the preparation of this work, C.B. was supported by an Australian Government Research Training Program (RTP) Scholarship and 
I.E.S. by the Australian Research Council Grant DP170100786.

\bibliographystyle{plain}


\end{document}